\newenvironment{mainresult}{
  \par\medskip
  \noindent\textbf{Main Result.}\ \itshape
}{
  \par\medskip\normalfont
}
\def\supp{\mathop{\rm supp}}
\def\argmin{\mathop{\rm arg\, min}}
\def\B{{\mathcal B}}
\def\E{{\mathcal E}}
\def\P{{\mathcal P}}
\newcommand{\R}{\mathds{R}}
\newcommand{\N}{\mathbb{N}}
\newcommand{\dd}{\mathrm{d}}
\newcommand{\sy}[1]{{\color{black} #1}}
\newcommand{\adk}[1]{{\color{black} #1}}
\begin{document}
\sloppy
\title{Approximations and Learning for Continuous State and Action MDPs under Average Cost Criteria \thanks{
This research was supported in part by
the Natural Sciences and Engineering Research Council (NSERC) of Canada.}
}

\author{Ali Devran Kara and Serdar Y\"uksel
\thanks{Ali D. Kara is with the Department of Mathematics, Florida State University, Tallahassee, FL, USA, Email: akara@fsu.edu. S. Y\"uksel is with the Department of Mathematics and Statistics,
     Queen's University, Kingston, ON, Canada,
     Email: yuksel@queensu.ca}
     }

\editor{}

\maketitle

\begin{abstract}
 In this paper, for Markov Decision Processes (MDPs) with standard Borel spaces, (i) we first provide a discretization based approximation method for MDPs with continuous spaces under average cost criteria, and provide error bounds for approximations when the dynamics are only weakly continuous (for asymptotic convergence of errors as the grid sizes vanish) or Wasserstein continuous (with a rate in approximation as the grid sizes vanish) under certain ergodicity assumptions. In particular, we relax the total variation condition given in prior work to weak continuity or  Wasserstein continuity. (ii) We provide synchronous and asynchronous (quantized) Q-learning algorithms for continuous spaces via quantization (where the quantized state is taken to be the actual state in corresponding Q-learning algorithms presented in the paper), and establish their convergence.  (iii) We finally show that the convergence is to the optimal Q values of a finite approximate model constructed via quantization, which implies near optimality of the arrived solution. %Our Q-learning convergence results and their convergence to near optimality are new for continuous spaces.%, and the proof method is new even for finite spaces, to our knowledge.
 \end{abstract}

 \begin{keywords}
Reinforcement learning, stochastic control, finite approximations, MDPs with general spaces
\end{keywords}

\section{Introduction}\label{section:intro}

In this paper, we study approximate solutions for Markov decision processes (MDPs) under  average cost criterion. We consider problems with continuous state and action spaces and provide approximate planning and reinforcement learning results. In particular, we show near-optimality of solutions to approximate models and those obtained via reinforcement learning. 

Before we present the related research in these problems and discuss our contributions in more detail, we introduce the problem formulation:
A fully observed Markov control model is a tuple \[(\mathbb{X},\mathbb{U},  {\cal T}, c),\]
where $\mathbb{X}$ is the (standard Borel) state space that is a metric space with the associated metric $d_\mathds{X}$. Similarly, $\mathbb{U}$ is the action space, assumed to be a metric space with the metric $d_\mathds{U}$. Under $d_\mathds{X}$ and $d_{\mathds{U}}$, $\mathds{X}$ and $\mathds{U}$ are complete and separable. The transition kernel of the model is denoted by ${\cal T}$ on $\mathbb{X}$ given $\mathbb{X}\times\mathds{U}$. Finally, $c: \mathbb{X}\times\mathds{U} \to \mathbb{R}$ is the cost  function. 

Let $\mathbb{H}_0:= \mathbb{X}, \quad \mathbb{H}_t = \mathbb{H}_{t-1} \times \mathbb{X\times U}$ for $t=1, 2, \ldots$. We let, for $t \in \mathbb{Z}_+$, $h_t$ denote an element of $\mathbb{H}_t$, with $h_t=\{x_{[0,t]},u_{[0,t-1]}\}$.  We use the notation $x_{[0,t]}:=(x_0,x_1,\cdots,x_t)$. 

An admissible control policy $\pi$ is a sequence of measurable functions $(\pi_t: t = 0,1,2\dots)$ such that $\pi_t: \mathbb{H}_t \to \P(\mathbb{U})$ with $u_t=\pi_t(h_t)$ where $\P(\mathds{U})$ denotes the set of all probability measures on $\mathds{U}$. We denote the set of all admissible policies by $\Pi_A$. If an admissible policy $\pi$ is such that $u_t\sim f(x_t)$ for some Borel measurable $f: \mathbb{X} \to \P(\mathbb{U})$, then the policy is said to be stationary; we denote the set of stationary policies by $\Pi_S$. 

We assume that such an MDP model is given on the $(\mathbb{X}\times\mathds{U})^{\mathbb{Z}_+}$-valued state-action process $(X_t, U_t)_{t \in \mathbb{Z}_+}$. For each $x \in \mathbb{X}$ and $\pi \in \Pi_A$, the kernel $\mathcal{T}$ and the policy $\pi$ induce a probability measure (called a strategic measure) $P_x^\pi$ on $((\mathbb{X}\times\mathds{U})^{\mathbb{Z}_+},{\cal B}((\mathbb{X}\times\mathds{U})^{\mathbb{Z}_+}))$ (where ${\cal B}(\cdot)$ denotes the Borel $\sigma$-field on the space $\cdot$) with $X_0 \sim \delta_x$. We let $E_x^\pi$ denote the corresponding expectation.

We consider the following average cost problem:
\begin{eqnarray}\label{AverageCostProblemDef}
J^*(x):= \inf_{\pi} J(x,\pi) = \inf_{\pi \in \Pi_A} \limsup_{T \to \infty} {1 \over T} E^{\pi}_x [\sum_{t=0}^{T-1} c(x_t,u_t)].
\end{eqnarray}

We also define the discounted cost for some discount factor $0<\beta<1$:
\begin{eqnarray}\label{DiscCostProblemDef}
J_\beta^*(x):= \inf_{\pi} J_\beta(x,\pi) = \inf_{\pi \in \Pi_A} \sum_{t=0}^{\infty}\beta^tE^{\pi}_x [ c(x_t,u_t)].
\end{eqnarray}

%In this paper, we will show that under mild conditions, Q-learning can be applied for average cost problems for arriving at near optimal solutions.

\subsection{Literature Review}  Finding efficient solutions to MDPs with continuous spaces is a challenging and important problem. In this context, various approximation techniques have been presented \citep[see e.g.][]{DuPr12,Ber75,chow1991optimal,BertsekasTsitsiklisNeuro,CsabaAlgorithms,tsitsiklis1997analysis,singh1995reinforcement,melo2008analysis,gaskett1999q,CsabaSmart,meyn2022control,SaYuLi15c,SaLiYuSpringer}. Many studies in the literature have focused on either finite-horizon problems or infinite-horizon discounted cost problems, 
using the dynamic programming principle and the contraction properties of Bellman operators induced by the discount factor. For average cost problems, however, the same techniques are not immediately applicable. %, since the direct application of dynamics programming type methods may not be stable. Thus, to study the average cost problems, different tools are required to ensure the stability of the algorithms. 

For MDPs with continuous state spaces, existence of optimal solutions has been well studied under the infinite horizon average cost criterion, under either weak continuity of the kernel (in both state and action) or strong continuity (of the kernel in actions for every state), together with various stability/ergodicity assumptions. We refer the reader to the works by \cite{survey,hernandez2012adaptive,HernandezLermaMCP,hernandezlasserre1999further,costa2012average,GoHe95,Veg03,vega2003average,HernandezLermaMCP,Borkar2,FeKaZa12} for comprehensive surveys on optimality results and general solution techniques for optimal control under the average cost criteria. However, research on computational and reinforcement learning methods for average cost optimality still entails open problems, especially for MDPs with continuous spaces.

%However, the proposed solution methods do not directly lead to computationally efficient methods. %, in particular, the recent progress in reinforcement learning algorithms and approximation techniques for MDPs have not been projected at the same level for the average cost criteria for general continuous models. 

The primary contributions of the paper concern finite approximations for MDPs with continuous spaces, their near optimality, and reinforcement learning for average cost problems. We will summarize the related research in these areas separately:

\noindent{\bf Approximations for MDPs with continuous spaces.}
For the study of continuous space MDPs, establishing regularity and continuity properties of the value functions is crucial. To this end, one usually needs continuity assumptions on the stage-wise cost functions, and continuity assumptions on the transition models under a suitable metric. Weak convergence metrics, and Wasserstein distances are used frequently for the regularity of the transition models since they are, in general, weaker and less demanding than other metric choices, e.g., total variation and relative entropy (Kullback–Leibler divergence) type distance notions. These are used to establish Lipschitz continuity of the value functions, as well as to establish the consistency of model approximations. The papers by \cite{pirotta2015policy,asadi2018lipschitz,Rachelson2010OnTL,maran2023tight}  study MDPs with Wasserstein continuous transition models under the discounted cost setting. However, one drawback of these papers is that they work on a class of Lipschitz continuous policies which is a suboptimal class in general. For the discounted cost criterion, \cite{saldi2014near,SaldiLinderYukselTAC14,SaYuLi15e,SaYuLi15c,KSYContQLearning} have shown that under only weak continuity conditions for an MDP with standard Borel state and action spaces, finite models obtained by the quantization of the state and action spaces lead to control policies that are asymptotically optimal as the quantization rate increases. They also established convergence rates under Lipschitz continuity assumptions on the model, without requiring the class of policies to be a priori continuous. The authors of the current paper generalized these results to general approximation schemes beyond discretization based approaches under several different continuity assumptions on the transition models such as {\it continuous} weak continuity, set-wise continuity as well as total variation continuity \citep{kara2020robustness,KaraYuksel2021Chapter}.

\noindent{\bf On the weak metrics for approximations and robustness to model mismatch.} 
Related to the discussion above, a salient consideration involving approximations for MDPs with general spaces is that unless convergence in models occurs in a strong sense, such as in total variation uniform over sets and actions, as models approach one another, the induced expected costs may not asymptotically agree under every admissible policy. If convergence is to hold under the Wasserstein metric, such a uniform convergence only would hold if policies were restricted to be Lipschitz a priori  \citep[see e.g.][]{pirotta2015policy,asadi2018lipschitz,Rachelson2010OnTL,maran2023tight}, which is often too restrictive for the approximate models. A detailed study on such model convergence and robustness has been carried out by \cite{kara2020robustness,KaraYuksel2021Chapter}. Under {\it continuous convergence of kernels}, \cite{kara2020robustness} in discrete-time and \cite{pradhan2022near} in continuous-time, established robustness when a control policy designed for an approximate/incorrect model is applied to a true model. \cite{SaYuLi15c,KaraYuksel2021Chapter,KSYContQLearning} present a construction for the approximate models through quantizing the actual model with continuous spaces, which allows for continuity and robustness results with only a weak continuity assumption on the true transition kernel which, in turn, leads to {\it continuous weak convergence} of approximate models as discussed by \cite{KaraYuksel2021Chapter} and thus becomes an important special case of robustness under model convergence. Closely related to such results and continuous convergence, a topology on spaces of probability measures corresponding to laws of stochastic processes, which has been used in a wide variety of contexts in stochastic analysis, is defined as follows: a sequence of stochastic processes is said to converge to another process if their finite-dimensional marginals converge weakly, and their conditional distributions of future variables given the past (viewed as measure-valued stochastic processes) also converge weakly. \cite{aldous1981weak}  has termed this {\it extended weak convergence} and  \cite{hellwig1996sequential} has named it {\it the information topology}; these have recently been shown to be equivalent in discrete-time by \cite{backhoff2019all,pammer2024note}. The {\it adapted Wasserstein metric} \citep[see ][]{bartl2024wasserstein,backhoff2019all,beiglbock2022approximation} has been shown to possess similar robustness properties in a variety of applications \citep{bayraktar2020continuity,julio2020adapted,bartl2023sensitivity}, not unlike the continuous weak convergence notion noted above (see \cite{saldi2025kernel} for a comprehensive review). Please see Section \ref{FinMDPWassM} for further discussion.

\noindent{\bf Approximations for the average cost criterion.}
We note that the analysis of average cost problems is typically more challenging, especially for problems with continuous spaces, as the stability (or the ergodicity) of the problem plays a crucial role. For the average cost criterion, \citet{saldiAverage}, \citet[Theorem 4.14]{SaLiYuSpringer} provide error bounds for finite approximations; however, these results require total variation continuity of the transition models as well as certain mixing conditions. 

In this paper, we relax the total variation continuity condition to weak or Wasserstein continuity. 

\noindent{\bf Reinforcement learning for the average cost criterion.}
There are a number of publications that study reinforcement learning methods for MDPs under average cost criterion. To our knowledge, the majority of these studies focus on finite spaces. 

The papers by \citet{abounadi2001learning,gosavi2004reinforcement} are among the earliest studies that provide convergent learning algorithms based on relative value iteration (as well as stochastic shortest path under a recurrence condition for a given state), and the convergence of these algorithms has been established via the ODE method  by \cite{borkar2000ode} for finite models.

\cite{kondaActorCritic} studied policy improvement and actor-critic methods for continuous space (in particular, Polish space) MDPs under average cost criteria. The approach relies on the linear approximation of the value functions and the parametrization of the policies. The convergence of this method is shown under a uniform minorization assumption over the parametrized policy space \citep[Assumption 4.2]{kondaActorCritic} which is similar to the assumption used in our paper (see Assumption~\ref{mix_kernel}). However, the learned solution is only locally optimal due to the nature of policy parametrization methods.

\cite{ormoneit2002kernel} also focus on reinforcement learning methods for MDPs with continuous state spaces under the average cost criteria. Their method is based on a kernel-based approximate dynamic programming, and convergence of the algorithms is shown under a minorization condition (similar to Assumption \ref{mix_kernel}) for several different averaging kernel functions. However, they impose strong regularity conditions on the transition model for the kernel based methods to work. In particular, it is assumed that the transition kernel admits  a density function with respect to the Lebesgue measure such that $\mathcal{T}(dx_1|x,u)=f_u(x_1,x)\lambda(dx_1)$ where $f_u(x_1,x)$ is strictly positive and uniformly continuous in both variables. This assumption is, in particular, even stronger than total variation continuity of the transition kernel.

Among the relatively more recent studies, the comprehensive paper \cite{wan2021learning} provides convergent off-policy learning algorithms to stabilize the value function estimation for finite models; the convergence proof by \cite{wan2021learning} builds on the ODE method \citep{abounadi2001learning,borkar2000ode} but relaxes some of the conditions in \cite{abounadi2001learning} with regard to the reference term subtracted in each iterate. We should note that the proof method of convergence by \cite{abounadi2001learning} (and thus \cite{wan2021learning}) for both the synchronous and asynchronous Q-learning build on the synchronous update dynamics analysis as these are equivalent under the ODE method.

\citet[Section 5]{abounadi2001learning} note the need for generalizing the analysis to continuous spaces for relative Q learning methods. We also note that the appendix of \cite{wan2021learning} notes the continuous state/action setup as an open problem, which our current paper addresses.

\cite{zhang21q} study a policy improvement method for average-cost (reward) MDPs for finite state-action spaces. \cite{wang23am} focus on robust model-free methods for finite systems where the transition model belongs to an uncertainty set which is constructed under various metrics. \cite{suttle23a} focus on relaxing the exponential mixing assumption for average cost criteria for finite models and provides an actor-critic method. \citet{zhang2021finite} study finite sample guarantees for a synchronous Q-learning algorithm under the average cost criterion. We also note that \cite{yang2019provably} study a convergent actor-critic method under average-cost criteria for continuous models with linear systems and additive Gaussian noise. We  refer the reader to the paper by \cite{CsabaAlgorithms} for a general review on the subject. 

We also note more recent work on average cost MDPs that focuses on sample complexity \citep[see][]{chen2025nonasymptotic,bravo2024stochastic,jin2024feasible,zhang2021finite2}.

A complementary  line of work studies infinite-horizon average cost problems for continuous spaces under  structural assumptions. One such direction involves linear MDPs where the transition models and the cost function are assumed to be within the linear span of known basis (feature) functions. \cite{weiAvgLinear,wuMinimaxAvg} and
more recently  \cite{hongAvgLinear,chaeAvgLinearMixture,hongAvgEfficient} study  linear and linear mixture MDPs via the vanishing discount approach.  For kernel-based function approximation, \cite{vakiliKernelAvg} propose an  algorithm under reproducing kernel Hilbert space assumptions on the value function. In this paper, we do not make  parametric assumptions on the model, and only assume   weak or Wasserstein continuity of the transition kernel on a general standard Borel space, and establish almost sure convergence of (quantized) Q-learning to the optimal value of an explicit finite approximate model with near optimality guarantees.

 Almost all of these papers focus on MDP problems under infinite horizon average cost criteria for finite models, i.e. where the state and action spaces are finite, or impose  parametric (linear/kernel) structure on the model and thus are able to use the Markovian nature for learning.

To this end, in our paper we consider general continuous spaces for which we first construct a discretized model where we present weaker continuity conditions than currently available in the literature. After discretization, the convergence analysis for learning methods requires further adaptations as the discretized states are no longer Markovian. In addition, ergodicity conditions are required to ensure the stability of the associated stochastic iteration algorithms. For continuous models, the ergodicity conditions and the stability analysis differ significantly from those involving finite models. In particular, in addition to obtaining average cost counterparts of the analysis of \cite{KSYContQLearning} \citep[which in turn builds on][]{kara2021convergence}, the paper introduces additional technical methods for the convergence analysis that are new even for finite MDPs, whose treatment in the literature has been restricted to the ODE method as noted earlier.

{\bf Contributions.} In view of the above, we address several open questions in the literature along the following directions:

\begin{itemize}
\item[(i)] [Approximation Results for Average Cost Infinite Horizon Control] In Section \ref{finite_app_sec}, we provide a discretization-based approximation method for fully observed MDPs with continuous spaces under the average cost criteria, and we provide error bounds for the approximations when the dynamics are only weakly continuous under certain ergodicity assumptions. Theorem \ref{weak:mainthm2} provides error bounds for action space discretization.  Theorem  \ref{pol_err} presents near optimality of control policies obtained for  models with discretized state spaces. Notably, we relax the total variation condition given by  \cite{saldiAverage,SaLiYuSpringer}  to weak (Feller) continuity (in Theorem \ref{weakContApprAsy}) or Wasserstein continuity conditions (in Theorem \ref{pol_err}); the former leads only to asymptotic convergence, whereas the latter provides a rate of convergence.

\item[(ii)] [Reinforcement Learning Analysis for Continuous State/Action Models] \adk{ In  Section \ref{q_cont_sec}, we consider what happens if one uses Q-learning with a quantization map, and whether the learned values correspond to the approximate model constructed in Section \ref{finite_app_sec}.
 We present quantized Q-learning algorithms and show that they indeed converge to the optimal Q-values of the approximate models constructed in Section \ref{finite_app_sec} for a particular weighting measure that is given by the invariant measure of the state process.} When one runs the $Q$-learning algorithm, it is important to note that the quantized process is not an MDP, and in fact should be viewed as a POMDP, a viewpoint used by \cite{kara2021convergence} and by \cite{KSYContQLearning}. For the synchronous algorithm, we use the properties of the span semi-norm. For the asynchronous setup, we generalize the proof method given by \cite{kara2021convergence} for the average cost criterion under certain ergodicity properties induced by an exploration policy, though with additional technical analysis as the $Q$-iterates do not satisfy the boundedness properties a priori unlike the discounted cost criterion setup. In particular, in Section \ref{syn_q_sec} we present and study a synchronous Q-learning algorithm, and in Section \ref{asyn_q_sec}, we present and study an asynchronous algorithm. Theorem \ref{syn_th} establishes the convergence of a synchronous Q-learning algorithm. Theorem \ref{quant_q} shows the convergence of an asynchronous Q-learning algorithm. 

%We emphasize that even for the finite state/action setup our Q-learning convergence proof methods are new, to our knowledge. The proof method presented in our paper facilitates the stochastic analysis needed for the non-Markovian nature of quantized state dynamics under the average cost criterion. 

\item[(iii)] [Convergence to Near-Optimality for Continuous State/Action Models] For both the synchronous and the asynchronous quantized Q-learning algorithms, the limit is shown to be the fixed-point solution of the optimality equation of an approximate model as in (i) above, and thus the convergence is to near-optimal policies. 

\end{itemize}

\sy{To give a taste of the main results, we summarize our approximation results for finite action spaces. Approximation of continuous action spaces by finite sets is discussed in Section~\ref{finiteActSp}; accordingly, we assume throughout this summary that the action space is finite.

We consider a finite partition of the state space $\mathds{X}$, given by $\{B_i\}_{i=1}^M$. We denote by $L_\mathds{X}$  the worst case distortion over the partition bins under a given weight measure (see \eqref{loss_func} for the definition).  A precise statement of the following theorem is  given in Theorem~\ref{main_thm}.

\begin{mainresult}
Suppose $\hat{\pi}$  is optimal for the approximate model. For a given initial state $x_0\in\mathds{X}$, consider the average-cost of $\hat{\pi}$ in the original MDP denoted by $J(x_0,\hat{\pi})$, and the optimal average-cost value of the original MDP denoted by $J^*(x_0)$. Furthermore, the optimal average cost value of the approximate MDP defined in \eqref{lifted_model}-\eqref{finite_cost} is  denoted by $\hat{J}(x_0)$ for initial state $x_0\in\mathds{X}$.  

Under Assumption \ref{mix_kernel}, these values are independent of the initial state and we write
\begin{align*}
 \rho(\hat{\pi})=J(x_0,\hat{\pi}),\quad \hat{\rho}=\hat{J}(x_0),\quad  \rho=J^*(x_0)  
 \end{align*}
 for all $x_0\in\mathds{X}$.
Here, $K_c$ denotes the Lipschitz constant of the one-stage cost function and
$K_{\mathcal T}$ denotes the Lipschitz (contraction) constant of the transition kernel,
as defined in Assumption~\ref{lip_assmp}.

Fix $\delta$ such that $K_\mathcal{T}<\left(\frac{1}{\kappa}\right)^{\frac{1-\delta}{\delta}}$ where $\kappa:=1-\nu(\mathds{X})>0$, and let $D$ denote the diameter of the state space.  
\begin{itemize}
\item[i.] Under  Assumption \ref{lip_assmp} with $K_\mathcal{T}<1$, we have that
\begin{align*}
|\rho-\hat{J}(x_0)|\leq  \frac{K_c}{1-K_\mathcal{T}}L_\mathds{X}.
\end{align*}
\item[ii.] Under  Assumption \ref{mix_kernel} and Assumption \ref{lip_assmp}, 
\begin{align*}
\left|{\rho}-\hat{\rho}\right|\leq\frac{K_cD^{1-\delta}}{1-K_\mathcal{T}^\delta\kappa^{1-\delta}}\left(L_\mathds{X}\right)^\delta
\end{align*}
Furthermore, 
\begin{align*}
\rho(\hat{\pi})-\rho\leq \frac{2K_cD^{1-\delta}}{(1-\kappa)\left(1-K_\mathcal{T}^\delta\kappa^{1-\delta}\right)}\left(L_\mathds{X}\right)^\delta
\end{align*}
\item[iii.] If the quantization is such that $L_\mathds{X}=\frac{1}{n}$ (which is possible as $\mathds{X}$ is assumed to be compact), then by denoting the learned policy by $\pi_n$, under Assumptions \ref{mix_kernel} and  \ref{weak:as1}, we have that \[\lim_{n\rightarrow\infty} \rho(\pi_n) =\rho.\]
\item [iv.] Running  Q-learning on quantized states (called Quantized Q-Learning with a synchronous Algorithm \ref{Qit1} and asynchronous Algorithm \ref{Qit_async}) converges and results in a policy $\hat{\pi}$. This policy corresponds to an optimal policy for a discretized approximate model, with discretization bins weighted by the stationary distribution of the state process induced by an exploration policy. As a corollary of the above, the convergence is to near optimality.
\end{itemize}
\end{mainresult}
}
\section{Average Cost Optimality Equation and Contraction Properties of Relative Value Iteration}

We start our analysis by first reviewing technical tools needed throughout the paper and related results on average cost optimality.

\subsection{Convergence Notions for Probability Measures and Regularity Properties of Transition Kernels}
For the analysis of the technical results, we use different notions of convergence for sequences of probability measures.

Two important notions of convergence for sequences of probability measures are weak convergence and convergence under total variation. A sequence $\{\nu_n,n\in\N\}$ in $\mathcal{P}(\mathds{X})$ is said to converge to $\nu\in\mathcal{P}(\mathds{X})$ \emph{weakly} if $\int_{\mathds{X}}c(x)\nu_n(dx) \to \int_{\mathds{X}}c(x)\nu(dx)$ for every continuous and bounded $c:\mathds{X} \to \R$.
%One important property of weak convergence is that the space of probability measures on a complete, separable, and metric (Polish) space endowed with the topology of weak convergence is itself complete, separable, and metric \citep{Par67}. One such metric is the bounded Lipschitz metric  \cite[p.109]{villani2008optimal}, which is defined for $\nu,\nu \in \P(\mathds{X})$ as 
%\begin{equation}\label{BLmetric}
%\rho_{BL}(\nu,\nu):=\sup_{\|f\|_{BL}\leq1} | \int f d\nu - \int f d\nu | 
%\end{equation}
%where \[ \|f\|_{BL}:=\|f\|_\infty+\sup_{x\neq y}\frac{|f(x)-f(y)|}{d(x,y)} \]
%and $\|f\|_\infty=\sup_{x\in\mathds{X}}|f(x)|$.

  For probability measures $\mu,\nu \in \mathcal{P}(\mathds{X})$, the \emph{total variation} metric is given by
  \begin{align*}
    \|\mu-\nu\|_{TV}&=2\sup_{B\in\mathcal{B}(\mathds{X})}|\mu(B)-\nu(B)|=\sup_{f:\|f\|_\infty \leq 1}\left|\int f(x)\mu(\dd x)-\int f(x)\nu(\dd x)\right|,
  \end{align*}
  \noindent where the supremum is taken over all measurable real-valued functions $f$ such that $\|f\|_\infty=\sup_{x\in\mathds{X}}|f(x)|\leq 1$. A sequence $\nu_n$ is said to converge in total variation to $\nu \in \mathcal{P}(\mathds{X})$ if $\|\nu_n-\nu\|_{TV}\to 0$.

 {Finally, for probability measures $\mu,\nu \in \mathcal{P}(\mathds{X})$ with finite first-order moments (that is, $\int \|x\| \, d\mu$ and $\int \|x\| \, d\nu $ are finite)}, the \emph{first order Wasserstein} distance is defined as 
\begin{align*}
W_1(\mu,\nu)=\inf_{\pi(\mu,\nu)}E[d_\mathds{X}(X,Y)]=\sup_{f: Lip(f)\leq 1}|\int f(x)\mu(dx)-\int f(x)\nu(dx)|
\end{align*}
where $\pi(\mu,\nu)$ denotes all possible couplings of $X$ and $Y$ with marginals $X\sim\mu$ and $Y\sim\nu$, {and 
\begin{align}
Lip(f) := \sup_{e \neq e'} \frac{\left|f(e) - f(e')\right|}{d_\mathds{X}(e,e')},\nonumber
\end{align}}
and the second {equality follows from the dual formulation of the Wasserstein distance \cite[Remark 6.5]{Vil09}}. Note that weak convergence and Wasserstein convergence are equivalent if the underlying space is compact.

We now introduce the class of H\"older continuous functions and consider their use as test functions for studying probability measures.
 Let $f : \mathds{X} \to \mathbb{R}$.
The function $f$ is said to be {H\"older continuous of order}
$\delta \in (0,1]$ if there exists a constant $C>0$ such that
\[
|f(x) - f(y)| \le C\, d_\mathds{X}(x,y)^{\delta},
\qquad \forall x,y \in \mathds{X}.
\]
Here, $\delta$ is called the \emph{H\"older exponent} and $C$ the
\emph{H\"older constant}. We denote the optimal {H\"older constant} of $f$ by $[f]_{H^\delta}$ such that
\begin{align}\label{hold_def}
[f]_{H^\delta} := \sup_{x \neq y} \frac{|f(x)-f(y)|}{d_\mathds{X}(x,y)^\delta}.
\end{align}
\begin{lemma}{\citep[see][Proposition 10]{maran2023tight}}\label{hold_ineq_lem}
For any $\delta$-H\"older $f$, we have 
\begin{align*}
\left| \int f(x)P(dx)-\int f(x)P'(dx)\right|\leq [f]_{H^\delta} W_1(P,P')^\delta.
\end{align*}
\end{lemma}

We define the following regularity properties for the transition kernels:
\begin{itemize}
\item $\mathcal{T}(\cdot|x,u)$ is said to be weakly continuous in $(x,u)$ (or weak Feller), if $\mathcal{T}(\cdot|x_n,u_n)\to \mathcal{T}(\cdot|x,u)$ weakly for any $(x_n,u_n)\to (x,u)$.
\item $\mathcal{T}(\cdot|x,u)$ is said to be continuous under total variation in  $(x,u)$, if $\|\mathcal{T}(\cdot|x_n,u_n)- \mathcal{T}(\cdot|x,u)\|_{TV}\to 0$  for any $(x_n,u_n)\to (x,u)$.
\item $\mathcal{T}(\cdot|x,u)$ is said to be continuous under the first-order Wasserstein distance in $(x,u)$, if \[W_1(\mathcal{T}(\cdot|x_n,u_n), \mathcal{T}(\cdot|x,u))\to 0\]  for any $(x_n,u_n)\to (x,u)$. {To ensure continuity of $\mathcal{T}$ with respect to the first-order Wasserstein distance, in addition to weak continuity, we may assume that there exists a function $g:[0,\infty) \rightarrow [0,\infty)$ such that $\frac{g(t)}{t} \uparrow \infty$ as $t \to \infty$, and 
$$\sup_{(x,u) \in K \times \mathds{U}} \int g(\|y\|) \, \mathcal{T}(dy|x,u) < \infty$$ for any compact $K \subset \mathds{X}$. Note that the latter condition implies uniform integrability of the collection of random variables with probability measures ${\cal T}(dx_1|X_0=x_n,U_0=u_n)$ as $(x_n, u_n) \to (x,u)$, which, coupled with weak convergence, can be shown to imply convergence under the Wasserstein distance.}
\end{itemize}
\begin{example}\label{examples}
Some example models satisfying these regularity properties are as follows:
\begin{itemize}
\item[(i)] For a model with the dynamics $x_{t+1}=f(x_t,u_t,w_t)$, the induced transition kernel $\mathcal{T}(\cdot|x,u)$ is weakly continuous in $(x,u)$ if $f(x,u,w)$ is a continuous function of $(x,u)$, since for any continuous and bounded function $g$
\begin{align*}
&\int g(x_1)\mathcal{T}(dx_1|x_n,u_n)=\int g(f(x_n,u_n,w))\nu(dw)\\
&\to\int g(f(x,u,w))\nu(dw)=\int g(x_1)\mathcal{T}(dx_1|x,u)
\end{align*}
where $\nu$ denotes the probability measure of the noise process.
{If we also have that $\mathds{X}$ is compact, the transition kernel $\mathcal{T}(\cdot|x,u)$ is also continuous under the first order Wasserstein distance}.

\item[(ii)] For a model with the dynamics $x_{t+1}=f(x_t,u_t)+w_t$, the induced transition kernel $\mathcal{T}(\cdot|x,u)$ is continuous under total variation in $(x,u)$ if $f(x,u)$ is a continuous function of $(x,u)$, and $w_t$ admits a continuous density function. 
\item[(iii)] In general, if the transition kernel admits a continuous density function $f$ such that $\mathcal{T}(dx_1|x,u)=f(x_1,x,u)\lambda(dx_1)$, then $\mathcal{T}(dx_1|x,u)$ is continuous in total variation. This follows from an application of  Scheff\'e's Lemma \cite[Theorem 16.12]{Bil95}. In particular, we can write that
\begin{align*}
\|\mathcal{T}(\cdot|x_n,u_n)-\mathcal{T}(\cdot|x,u)\|_{TV}=\int_{\mathds{X}}|f(x_1,x_n,u_n)-f(x_1,x,u)|\lambda(dx_1)\to 0.
\end{align*}
\item[(iv)] For a model with the dynamics $x_{t+1}=f(x_t,u_t,w_t)$, if $f$ is Lipschitz continuous in the $(x,u)$ pair, that is, there exists some $\alpha<\infty$ such that
\begin{align}\label{LipDepW}
d_\mathds{X}\left(f(x_n,u_n,w),f(x,u,w)\right)\leq \alpha\left(d_\mathds{X}(x_n,x)+d_\mathds{U}(u_n,u)\right),
\end{align}
we can then bound the first order Wasserstein distance between the corresponding kernels by $\alpha$:
\begin{align}
&W_1\left(\mathcal{T}(\cdot|x_n,u_n),\mathcal{T}(\cdot|x,u)\right)=\sup_{Lip(g)\leq 1}\left|\int g(x_1)\mathcal{T}(dx_1|x_n,u_n)-\int g(x_1)\mathcal{T}(dx_1|x,u) \right|\nonumber \\
&=\sup_{Lip(g)\leq 1}\left|\int g(f(x_n,u_n,w))\nu(dw)-\int g(f(x,u,w))\nu(dw)\right| \nonumber \\
&\leq \int d_\mathds{X}\left(f(x_n,u_n,w),f(x,u,w)\right)\nu(dw)\leq \alpha\left(d_\mathds{X}(x_n,x)+d_\mathds{U}(u_n,u)\right).\label{averageLipB}
\end{align}
Observe that the Lipschitz bound in (\ref{LipDepW}) may exhibit dependency on the realization of $w$, whose average growth under measure $\nu$ can lead to a more relaxed bound for the transition kernel deviation in (\ref{averageLipB}).
\end{itemize}
\end{example}

\subsection{The Average Cost Optimality Equation}\label{acoe_section}

Consider the following average cost problem:
\begin{eqnarray}\label{AverageCostProblemDef2}
J^*(x):= \inf_{\pi \in \Pi_A} J(x,\pi) = \inf_{\pi \in \Pi_A} \limsup_{T \to \infty} {1 \over T} E^{\pi}_x [\sum_{t=0}^{T-1} c(x_t,u_t)].
\end{eqnarray}
To study the average cost problem, one common approach is to establish the existence of an average cost optimality equation (ACOE), and an associated verification theorem. 

\begin{definition}\label{triplet}
The collection of measurable functions $\rho: \mathbb{X} \to \mathbb{R}, h : \mathbb{X} \to \mathbb{R}, f: \mathbb{X} \to \mathbb{U}$ is a canonical triplet if for all $x \in \mathbb{X}$, 
\[\rho(x)=\inf_{u \in \mathbb{U}} \int \rho(x') {\cal T}(dx'|x,u)\]
\[\rho(x) + h(x) = \inf_{u \in \mathbb{U}} \bigg(c(x,u) + \int h(x') {\cal T}(dx'|x,u) \bigg) \]
with
\[\rho(x) = \int \rho(x') {\cal T}(dx'|x,f(x))\]
\[\rho(x) + h(x) = \bigg(c(x,f(x)) + \int h(x') {\cal T}(dx'|x,f(x)) \bigg) \]
We will refer to these relations as the average cost optimality equation (ACOE).
\end{definition}
The following verification theorem is a standard result \citep[see][]{survey,HernandezLermaMCP}
\begin{proposition}\label{acoe_prop}
Let $\rho,h,f$ be a canonical triplet.
If $\rho$ is a constant and 
\begin{eqnarray}
\limsup_{n \to \infty} {1 \over n}E^{\pi}_x[h(x_n)] =0, \label{condConv01}
\end{eqnarray}
 for all $x$ and under every policy $\pi$, then the stationary deterministic policy $\pi^* = \{f,f,f,\cdots\}$ is optimal, such that
\[\rho= J(x,\pi^*) = \inf_{\pi \in \Pi_A} J(x,\pi)\]
where
\[J(x,\pi) = \limsup_{T \to \infty} {1 \over T} E^{\pi}_x[\sum_{k=0}^{T-1} c(x_t,u_t)].\]
\end{proposition}

In the following, we present  two different approaches to establish existence of solutions to the average cost optimality problem. One approach is via a direct contraction argument, which relates average cost optimality to discounted cost optimality of an equivalent problem; and another using a Wasserstein contraction argument. These approaches will also be used to establish error bounds for the approximation methods presented in the paper.

\subsubsection{Contraction under the sup norm by equivalence with a discounted cost problem}\label{secMinorizationACOE}
We have the following minorization condition.

\begin{assumption}\label{mix_kernel}
There exists a positive measure $\nu$ such that
\[{\cal T}(B | x,u) \geq \nu(B),\]
for all $B \in {\cal B}(\mathbb{X})$ and for all $(x,u)\in\mathds{X}\times\mathds{U}$.
\end{assumption}
Under Assumption \ref{mix_kernel}, define ${\cal T}'(\cdot|x,u) = {\cal T}(\cdot|x,u) - \nu(\cdot)$, which is a positive measure. Then, the map
\begin{align}\label{contractAverageCostArgum}
(\mathbb{T}'(f))(x) = \min_{u\in\mathds{U}} \bigg(c(x,u) + \int f(x_1){\cal T}'(dx_1 | x,u)\bigg)
\end{align}
is a contraction with contraction constant $\kappa:=1-\nu(\mathds{X})<1$ (see \cite[p.61]{hernandez2012adaptive} for a historical review on this approach). With this assumption, one can apply the standard value iteration algorithm using $\mathbb{T}'$. The limit equation
\begin{align}
f(x) &= \min_{u\in\mathds{U}} (c(x,u) + \int f(x_1){\cal T}'(dx_1 | x,u)) \nonumber \\
&= \min_{u\in\mathds{U}}  (c(x,u) + \int f(x_1){\cal T}(dx_1 | x,u)) - \int f(x_1) \nu(dx_1)
\end{align}
is the desired ACOE in Definition \ref{triplet} with $\rho \equiv \int f(x_1) \nu(dx_1)$. The existence of a minimizing control policy is ensured by measurable selection conditions, assuming either weak continuity of the kernel (in both the state and action), or strong continuity (of the kernel in actions for every state) properties. We consider the former in the following:
\begin{assumption}
\label{weak:as}
\begin{itemize}
\item [(a)] The one-stage cost function $c$ is bounded and continuous.
\item [(b)] The stochastic kernel ${\cal T}(\,\cdot\,|x,u)$ is weakly continuous in $(x,u) \in {\mathbb X} \times \mathbb{U}$ (that is, weak Feller).
\item [(c)] $\mathbb{U}$ is compact.
\item [(d)] $\mathbb{X}$ is compact.
\end{itemize}
\end{assumption}

The corresponding measurable selection criteria are given by \citet[Theorem 2]{himmelberg1976optimal}, \cite{Schal}, \cite{schal1974selection} and \cite{kuratowski1965general}. We also refer the reader to the book by \cite{HernandezLermaMCP} for a comprehensive analysis and detailed literature review. One can then show \citep[see e.g.][] {survey,HernandezLermaMCP,hernandezlasserre1999further,GoHe95,Veg03,demirci2023average,yuksel2020control} that
under Assumptions \ref{mix_kernel} and \ref{weak:as}  there exists a solution to the average cost optimality equation.

\subsubsection{Contraction under the Wasserstein-1 distance}

The following is the main regularity assumption of the paper. %In particular, under a special case, this assumption also is a sufficient condition for the existence of solutions to ACOE, as an alternative to Assumption \ref{mix_kernel}.
\begin{assumption}\label{lip_assmp2}
\begin{itemize}
\item The original cost function $c$ is Lipschitz, such that $|c(x,u)-c(x',u')|\leq K_c (d_{\mathbb{X}}(x,x')+d_{\mathbb{U}}(u,u'))$ for some $K_c<\infty$ for all $x,x',u,u'$.
\item The transition kernel $\mathcal{T}$ is Lipschitz continuous under the first order Wasserstein distance such that $W_1(\mathcal{T}(\cdot|x,u),\mathcal{T}(\cdot|x',u'))\leq K_\mathcal{T} (d_{\mathbb{X}}(x,x')+d_{\mathbb{U}}(u,u'))$ for some $K_\mathcal{T}<\infty$ for all $x,x',u,u'$.
\item $\mathds{X}$ and $\mathds{U}$ are compact.
\end{itemize}
\end{assumption}

Under Assumption \ref{lip_assmp2} with $K_\mathcal{T}<1$, \citet{demirci2023average} showed that the ACOE for the original model admits a solution, and that 
\begin{align*}
\lim_{\beta\to 1}(1-\beta) J_\beta^*(x) = \rho.\nonumber
\end{align*}
where $J_\beta^*(x)$ denotes the optimal value function under the discounted cost criteria with  discount factor $0<\beta<1$ (see (\ref{DiscCostProblemDef})).

%In the paper, for the analysis on the convergence of Q-learning algorithms; a span semi-norm based approach will be used for the synchronous update algorithm, and a supremum norm based approach will be utilized for the analysis of the asynchronous update algorithm.

% and \cite[Theorem 2.1]{feinberg2021mdps}. %In our paper, we will focus on the weakly continuous setup; this setup also applies for belief-MDP reduction of POMDPs: It is known that every POMDP model can be reduced to an equivalent fully observed model \cite{Yus76,Striebel,Rhe74}, where the state is replaced with the conditional probability on the state given the available information, with the resulting fully observed MDP with a probability-measure valued state often referred to as a belief-MDP. Under mild regularity conditions, the stochastic kernel defined by the non-linear filter for the belief-MDP corresponding to a POMDP is weakly continuous \cite{FeKaZg14,KSYWeakFellerSysCont}. 

%NOTE TO ALI: USING THIS, WE CAN DIRECTLY CITE \cite[Theorem 16]{KaraYuksel2021Chapter} or \cite[Theorem 2.2 and 4.1]{adkSaYuLi15c} for quantized approximations, without using the Wasserstein modulus of continuity..

\section{Near Optimality of Quantized State and Action Space Approximations}\label{finite_app_sec}
In the following, we build on the work of \cite{SaYuLi15c,SaLiYuSpringer} to construct approximate MDPs with finite state and action spaces. Unlike \cite{SaYuLi15c,SaLiYuSpringer}, we require weaker conditions for the average cost criterion: notably, only weak convergence is sufficient for the approximation results, rather than the total variation continuity assumed by \cite{SaYuLi15c,SaLiYuSpringer}.

\subsection{Finite Action Approximate MDP: Quantization of the Action Space}\label{finiteActSp}
Under Assumption \ref{weak:as}, the action space $\mathbb{U}$ is compact, and hence totally bounded. Therefore, one can find a finite set $\Lambda = \{u_{1},\ldots,u_{k}\} \subset \mathbb{U}$ such that 
\begin{align}\label{act_error}
\sup_{u\in\mathds{U}}\min_{\hat{u}\in\Lambda} d_{\mathbb{U}}(u,\hat{u}) =:\E(\Lambda)<\infty.
\end{align}

Consider the MDP with tuple $(\mathds{X}, \Lambda,\mathcal{T},c)$. Note that under Assumption \ref{mix_kernel}, both the original MDP and the finite action space MDP admit  solutions to the ACOE. Furthermore, under Assumption \ref{weak:as} or Assumption \ref{lip_assmp2}, there exist  optimal policies for the MDPs under both continuous and the finite action spaces.   We denote the optimal values under the average cost criteria by $\rho_\mathds{U}$ and $\rho_\Lambda$, respectively. 

For the MDPs with the continuous and finite action spaces, we write the following ACOE's:
\begin{align*}
&h(x)=\inf_{u\in\mathds{U}}\left\{c(x,u)+\int h(x_1)\mathcal{T}(dx_1|x,u)-\int h(x_1)\nu(dx_1)\right\}\nonumber\\
&\hat{h}(x)=\min_{u\in\Lambda}\left\{{c}(x,u)+\int \hat{h}(x_1){\mathcal{T}}(dx_1|x,u)-\int \hat{h}(x_1)\nu(dx_1)\right\}.
\end{align*}
Note that $\rho_\mathds{U}=\int {h}(x_1)\nu(dx_1)$ and $\rho_\Lambda=\int \hat{h}(x_1)\nu(dx_1)$. We follow the proof strategy by \cite{maran2023tight}, and use the H\"older continuity of the relative value functions for the near-optimality analysis. 
\begin{lemma}\label{holder_lemma}
Let  $\kappa=1-\nu(\mathds{X})>0$, and let $D$ denote the diameter of the state space $\mathds{X}$. Fix $\delta>0$ such that
$K_{\mathcal T} < \left(\frac{1}{\kappa}\right)^{\frac{1-\delta}{\delta}}$. Under Assumptions  \ref{mix_kernel} and \ref{lip_assmp2} 
\begin{align*}
[h]_{H^\delta} \leq \frac{K_c D^{1-\delta}}{1-\kappa\left(\frac{K_\mathcal{T}}{\kappa}\right)^\delta}.
\end{align*}
\end{lemma}
\begin{proof}
The proof can be found in Appendix \ref{holder_lemma_proof}.
\end{proof}
Then the following holds:
\begin{theorem} \label{weak:mainthm2}
Let  $\kappa=1-\nu(\mathds{X})>0$, and let $D$ denote the diameter of the state space $\mathds{X}$. Fix $\delta>0$ such that
$K_{\mathcal T} < \left(\frac{1}{\kappa}\right)^{\frac{1-\delta}{\delta}}$.
Then the following hold:
\begin{itemize}
\item[i.] Under Assumptions  \ref{mix_kernel} and \ref{weak:as}, we have  $\rho_\Lambda\to \rho_\mathds{U}$ as $\E(\Lambda)\to 0$.
\item[ii.] Under Assumption \ref{lip_assmp2}, if $K_\mathcal{T}<1$, then  
\begin{align*}
\rho_\Lambda -\rho_\mathds{U}\leq \frac{K_c}{1-K_\mathcal{T}} \E(\Lambda).
\end{align*}
\item[iii.]  Under Assumptions  \ref{mix_kernel} and \ref{lip_assmp2}, we have  
\begin{align*}
\rho_\Lambda -\rho_\mathds{U}\leq  \frac{K_cD^{1-\delta}}{1-\kappa\left(\frac{K_\mathcal{T}}{\kappa}\right)^\delta}\E(\Lambda)^\delta.
\end{align*}
\end{itemize}
\end{theorem}

\begin{proof}
(i) follows from \cite[]{saldi2014near},\cite[Theorem 3.22]{SaLiYuSpringer}. For (ii) under Assumption \ref{lip_assmp2} with $K_\mathcal{T} <1$, one can show that (see \cite{demirci2023average})
\begin{align*}
\lim_{\beta\to 1} (1-\beta)J_\beta^*(x) = \rho_\mathds{U}\\
\lim_{\beta\to 1} (1-\beta)\hat{J}_\beta(x) = \rho_\Lambda
\end{align*}
for any $x\in\mathds{X}$ where $J_\beta^*$  (respectively $\hat{J}_\beta$) represents the optimal discounted value function under the original action space $\mathds{U}$ (respectively under the action space $\Lambda$). Furthermore, we also have the following upper-bound for the discounted value function difference (see e.g. \cite{SaLiYuSpringer}):
\begin{align*}
\left|\hat{J}_\beta(x)-J_\beta^*(x)\right| \leq \frac{K_c}{(1-\beta)(1-\beta K_\mathcal{T})} \E(\Lambda).
\end{align*}
Hence, combining these two bounds gives the desired bound. Finally, for (iii), we work with the following ACOE's
\begin{align*}
&h(x)=\inf_{u\in\mathds{U}}\left\{c(x,u)+\int h(x_1)\mathcal{T}(dx_1|x,u)-\int h(x_1)\nu(dx_1)\right\}\nonumber\\
&\hat{h}(x)=\min_{u\in\Lambda}\left\{{c}(x,u)+\int \hat{h}(x_1){\mathcal{T}}(dx_1|x,u)-\int \hat{h}(x_1)\nu(dx_1)\right\}.
\end{align*}
Note that we have $\rho_\mathds{U}=\int h(x_1)\nu(dx_1)$ and $\rho_\Lambda=\int \hat{h}(x_1)\nu(dx_1)$. Thus, it suffices to find an upper bound on $\|h-\hat{h}\|_\infty$. Let $\hat{u}^*$ denote the minimizer of the second equation. Furthermore, let $\hat{u}=\argmin_{\Lambda} d_{\mathds{U}}(u^*,u)$ denote the closest element in the finite action set to the minimizer $u^*$ of the first equation whose existence is guaranteed under Assumption \ref{lip_assmp2}.

 We define $R(\cdot|x,u):=\frac{\mathcal{T}(\cdot|x,u)-\nu(\cdot)}{\kappa}$, with $\kappa:=1-\nu(\mathds{X})$ which is a  Markov kernel so that $R(\mathds{X}|x,u)=1$. It is then easy to check that 
\begin{align*}
W_1(R(\cdot|x,u^*),{R}(\cdot|x,\hat{u}))\leq \frac{K_\mathcal{T}}{\kappa} \E(\Lambda)
\end{align*}
using that fact that $d_{\mathds{U}}({u^*},\hat{u})\leq \E(\Lambda)$ by construction. One can write the above ACOEs as
\begin{align*}
&h(x)=c(x,u^*)+\kappa\int h(x_1)R(dx_1|x,u^*)\nonumber\\
&\hat{h}(x)={c}(x,\hat{u}^*)+\kappa\int \hat{h}(x_1){R}(dx_1|x,\hat{u}^*)
\end{align*}
We note that $\hat{h}\geq h$. Since $\hat{u}$ is not the optimal action for the finite action space in general, we can then write:
\begin{align*}
\hat{h}(x)-h(x) & \leq \left|c(x,\hat{u})-c(x,u^*)\right| + \kappa  \left|  \int \hat{h}(x_1)R(dx_1|x,\hat{u}) - \int h(x_1)R(dx_1|x,u^*)\right|\\
&\leq {K_c}\E(\Lambda) + \kappa   \left|  \int \hat{h}(x_1)R(dx_1|x,\hat{u}) - \int h(x_1)R(dx_1|x,\hat{u})\right|\\
&\qquad\qquad +  \kappa   \left|  \int h(x_1)R(dx_1|x,\hat{u}) - \int h(x_1)R(dx_1|x,u^*)\right|\\
&\leq {K_c}\E(\Lambda) +\kappa \|h-\hat{h}\|_\infty +  \kappa[h^*]_{H^\delta} \left(\frac{K_\mathcal{T}}{\kappa}\E(\Lambda)\right)^\delta
\end{align*}
where $[h]_{H^\delta}$ denotes the H\"older continuity constant of $h$ under $\delta$. Above, we also used Lemma \ref{hold_ineq_lem}.

Writing ${K_c}\E(\Lambda)  = K_c \E(\Lambda)^\delta\E(\Lambda)^{1-\delta}$, we can rearrange the terms above to get
\begin{align*}
\|h-\hat{h}\|_\infty \leq \frac{K_c \E(\Lambda)^{1-\delta} + [h]_{H^\delta} \kappa\left(\frac{K_\mathcal{T}}{\kappa}\right)^\delta}{1-\kappa} \E(\Lambda)^\delta.
\end{align*}
Using Lemma \ref{holder_lemma} for $[h]_{H^\delta}$, we write
\begin{align}\label{h_dif_act}
\|h-\hat{h}\|_\infty \leq \frac{K_cD^{1-\delta}}{\left(1-\kappa\left(\frac{K_\mathcal{T}}{\kappa}\right)^\delta\right)(1-\kappa)}\E(\Lambda)^\delta.
\end{align}
The proof is concluded by noting that $\rho_\Lambda - \rho_\mathds{U} \leq \|\hat{h}-{h}\|_\infty \nu(\mathds{X})$.

\end{proof}
In particular, $\rho_\Lambda$ denotes the average cost of the best policy that uses actions from the finite set $\Lambda$, but evaluated under the dynamics of the original MDP.
In view of the above, for the rest of the paper, we will assume that the action space $\mathds{U}$ is finite. Error bounds for a continuous action space problem can be obtained by appropriately discretizing the action space and using Theorem \ref{weak:mainthm2}. As such, we will replace Assumption \ref{lip_assmp2} with the following:
\begin{assumption}\label{lip_assmp}
\begin{itemize}
\item $\mathds{X}$ is compact and $\mathds{U}$ is finite.
\item The original cost function $c$ is Lipschitz, such that $|c(x,u)-c(x',u)|\leq K_c d_{\mathbb{X}}(x,x')$ with constant $K_c<\infty$ for all $x,x',u$.
\item The transition kernel $\mathcal{T}$ is Lipschitz continuous under the first order Wasserstein distance such that $W_1(\mathcal{T}(\cdot|x,u),\mathcal{T}(\cdot|x',u))\leq K_\mathcal{T} d_{\mathbb{X}}(x,x')$ for some $K_\mathcal{T}<\infty$ for all $x,x',u$.
\end{itemize}
\end{assumption}

\subsection{MDP Approximation Results}\label{finite_state_sec}
We construct an approximate MDP via state space discretization, following the construction of \cite{SaYuLi15c,SaLiYuSpringer}. We choose a collection of disjoint sets $\{B_i\}_{i=1}^M$ such that $\bigcup_i B_i=\mathds{X}$, and $B_i\bigcap B_j =\emptyset$ for any $i\neq j$. 
We choose a representative state, $y_i\in B_i$, for each disjoint set. For this setting, we denote the new finite state space by
$\mathds{Y}:=\{y_1,\dots,y_M\}$, and the mapping from the original state space $\mathds{X}$ to the finite set $\mathds{Y}$ is done via
\begin{align}\label{quant_map}
q(x)=y_i \quad \text{ if } x\in B_i.
\end{align}

Furthermore, we choose a weight measure $\mu\in\P(\mathds{X})$ on $\mathds{X}$ such that $\mu(B_i)>0$ for all $B_i$. We now define normalized measures using the weight measure on each separate quantization bin $B_i$ as follows:  
\begin{align}\label{norm_inv}
\mu_{y_i}(A):=\frac{\mu(A)}{\mu(B_i)}, \quad \forall \text{ }\mathrm{ Borel} \text{ }A \subset B_i, \quad \forall i\in \{1,\dots,M\},
\end{align}
that is, $\mu_{y_i}$ is the normalized weight measure on the set $B_i$, where $y_i$ belongs to. 
We then define  the cost  function and transition kernel for the approximate model as follows:
\begin{align}\label{lifted_model}
\hat{c}(x,u)&=\int_{B_i}c(z,u)  \mu_{y_i}(dz)\nonumber\\
\hat{\mathcal{T}}(A|x,u)&= \int_{B_i}\mathcal{T}(A|z,u)\, \mu_{y_i}(dz)
\end{align}
for any  $A\in\B(\mathds{X})$  and any  $x\in B_i$. We denote the optimal average cost for this approximate model by $\hat{J}(x)$ given the initial condition is $x$.

Note that although the approximate model is defined over the original state space $\mathds{X}$, by construction its cost function and transition kernel are constant over the quantization bins $\{B_i\}_{i=1}^M$. Hence, the approximate model can equivalently be viewed as a finite-state MDP with state space $\mathds{Y}=\{y_1,\dots,y_M\}$ where $y_i$ is the representative state of the quantization bin $B_i$.  Indeed, for any $y_i,y_j\in \mathds{Y}$ and $u \in \mathds{U}$, the stage-wise cost and the transition kernel for the finite-state model are defined as 
\begin{align}\label{finite_cost}
C(y_i,u) &:= \int_{B_i}c(x,u) \, \mu_{y_i}(dx),\nonumber\\
P(y_j|y_i,u) &:= \int_{B_i}\mathcal{T}(B_j|x,u)\, \mu_{y_i}(dx).
\end{align}
In the following, we establish approximation and performance results for control policies obtained via the approximate MDPs when applied to the true model.

\subsubsection{MDP Approximation via Wasserstein Continuity with Modulus of Continuity in Approximation}\label{FinMDPWassM}

The main goal of this section is to establish the near-optimality of policies designed for a finite MDP when applied to the continuous model. Denoting the optimal policy of the finite model by $\hat{\pi}$, we aim to find upper bounds on 
\begin{align*}
J(x,\hat{\pi}) - J^*(x)
\end{align*}
where $J^*(x)$ is the optimal value under the ergodic cost criterion for the original model. To analyze this term, we add and subtract the optimal value of the approximate model, i.e. $\hat{J}(x)$ yielding 
\begin{align}\label{add_sub}
\left(J(x,\hat{\pi}) - \hat{J}(x)\right) + \left(\hat{J}(x)- J^*(x)\right).
\end{align}
The second term is the difference between the value functions of the approximate and the original MDPs, and we first bound this difference as an intermediate step. The first term (which we may refer to as the robustness error term) measures the difference in the expected cost between the approximate and original MDP models under the \emph{same} policy, namely, the optimal policy of the discretized model. Our goal is to derive error bounds in terms of the Wasserstein–1 Lipschitz continuity of the original transition kernel using the approximate MDP. Therefore, unless the optimal policy is Lipschitz continuous, it is generally impossible to bound this term uniformly over policies. Restricting policies to be Lipschitz a priori has been previously considered in the literature \citep[see e.g.][]{pirotta2015policy,asadi2018lipschitz,Rachelson2010OnTL,maran2023tight}. Nonetheless, we will show that when the policy is an optimal policy of the approximate MDP, it is indeed possible to derive an error bound for the second term that vanishes as the discretization becomes finer (the principal reason being that the state and corresponding optimal action pairs converge to one another due to optimality which, together with continuous weak convergence of kernels, leads to the vanishing error). We note that, as is often the case for finite space MDPs, if the original model is instead approximated in total variation distance, the value difference under the same policy can be directly bounded for any policy due to the strong regularity imposed by total variation (see \citet[Section 4.5]{kara2020robustness} and \cite{NanJiangNote2022}). However, for general space MDPs, such a direct condition is too demanding.

We define the following loss functions induced by the quantization on the cost function and the transition kernel:
\begin{align}\label{loss_func_cost}
&L_c(x,u):=\left| c(x,u) - \hat{c}(x,u)\right|\nonumber\\
&L_T(x,u):=W_1\left(\mathcal{T}(\cdot|x,u),\hat{\mathcal{T}}(\cdot|x,u)\right)
\end{align}
where $\hat{c}$ and $\hat{\mathcal{T}}$ are the cost function and transition kernel of the approximate model defined in (\ref{lifted_model}).

\noindent{\bf Value Function Difference.} For the value difference bound, that is the second term in (\ref{add_sub}), we give two alternatives: one assuming the minorization condition Assumption \ref{mix_kernel} and one without minorization but using Wasserstein contraction of the kernel with $K_\mathcal{T}<1$ under Assumption \ref{lip_assmp}.
\begin{proposition}\label{val_dif2}
Consider the optimal average-cost values of the original and the approximate models, denoted by $J^*(x)$ and $\hat{J}(x)$, respectively, for a given initial state $x$. Under  Assumption \ref{mix_kernel} and Assumption \ref{lip_assmp}, these values are constant with $\rho=J^*(x)$ and $\hat{\rho}=\hat{J}(x)$ for all $x\in \mathds{X}$.  Fix $\delta$ such that $K_\mathcal{T}<\left(\frac{1}{\kappa}\right)^{\frac{1-\delta}{\delta}}$ where $\kappa:=1-\nu(\mathds{X})>0$, and let $D$ denote the diameter of the state space. Then, we have
\begin{align*}
&|\rho-\hat{\rho}|\leq  \|L_c\|_\infty +\left(\frac{K_c D^{1-\delta}  \kappa^{1-\delta}}{1-\kappa\left(\frac{K_\mathcal{T}}{\kappa}\right)^\delta}\right) \left(\|L_T\|_\infty\right)^\delta.
\end{align*}
\end{proposition}

\begin{proof}
The proof uses a similar technique to that of Theorem \ref{weak:mainthm2}.
We will work with the following ACOE's
\begin{align*}
&h(x)=\min_u\left\{c(x,u)+\int h(x_1)\mathcal{T}(dx_1|x,u)-\int h(x_1)\nu(dx_1)\right\}\nonumber\\
&\hat{h}(x):=\min_u\left\{\hat{c}(x,u)+\int \hat{h}(x_1)\hat{\mathcal{T}}(dx_1|x,u)-\int \hat{h}(x_1)\nu(dx_1)\right\}
\end{align*}
Note that we have $\rho=\int h(x_1)\nu(dx_1)$ and $\hat{\rho}=\int \hat{h}(x_1)\nu(dx_1)$. Thus, it suffices to find an upper bound on $\|h-\hat{h}\|_\infty$. Denoting by $\kappa=1-\nu(\mathds{X})$, one can write the above as
\begin{align*}
&h(x)=\min_u\left\{c(x,u)+\kappa\int h(x_1)R(dx_1|x,u)\right\}\nonumber\\
&\hat{h}(x):=\min_u\left\{\hat{c}(x,u)+\kappa\int \hat{h}(x_1)\hat{R}(dx_1|x,u)\right\}
\end{align*}
where $R(\cdot|x,u):=\frac{\mathcal{T}(\cdot|x,u)-\nu(\cdot)}{\kappa}$ is a Markov kernel so that $R(\mathds{X}|x,u)=1$. $\hat{R}$ is defined similarly. It is then easy to check by definition that for any $x,u$
\begin{align*}
W_1(R(\cdot|x,u),\hat{R}(\cdot|x,u))\leq \frac{L_T(x,u)}{\kappa}.
\end{align*}
 Using Lemma \ref{hold_ineq_lem}, we can show the following:
 \begin{lemma}\label{h_diff_lem}
 \begin{align*}
\|h-\hat{h}\|_\infty \leq \frac{1}{1-\kappa}  \left(  \|L_c\|_\infty +\left(\frac{K_c D^{1-\delta}  \kappa^{1-\delta}}{1-\kappa\left(\frac{K_\mathcal{T}}{\kappa}\right)^\delta}\right) \left(\|L_T\|_\infty\right)^\delta   \right).
\end{align*}
 \end{lemma}
 \begin{proof}
\begin{align*}
|h(x)-\hat{h}(x)| \leq& \sup_u\left|c(x,u) - \hat{c}(x,u)\right| \nonumber\\
&+ \kappa \|h-\hat{h}\|_\infty + \kappa\sup_u\left\{ \int h(x_1){R}(dx_1|x,u) - \int h(x_1)\hat{{R}}(dx_1|x,u)\right\} \nonumber\\
&\leq \|L_c\|_\infty + \kappa \|h-\hat{h}\|_\infty +  \kappa[h]_{H^\delta} \left(\frac{\|L_T\|_\infty}{\kappa}\right)^\delta
\end{align*}
where $[h]_{H^\delta}$ denotes the H\"older continuity constant of $h$ under $\delta$. 
Rearranging the terms and using the Lemma \ref{holder_lemma} to bound $[h]_{H^\delta}$ give the desired bound.
\end{proof}
We conclude the proof noting $|\rho-\hat{\rho}|\leq \|h-\hat{h}\|_\infty \nu(\mathds{X})$ and that $\nu(\mathbb{X}) = 1 - \kappa$.
\end{proof}

The following proposition shows that if the transition kernel satisfies a Wasserstein contraction, the minorization condition is not needed to bound the value difference:
\begin{proposition}\label{val_dif}
Under  Assumption \ref{lip_assmp} with $K_\mathcal{T}<1$, we have
\begin{align*}
|\rho-\hat{J}(x)|\leq   \|L_c\|_\infty + \frac{K_c}{1- K_\mathcal{T}}\|L_T\|_\infty
\end{align*}
where $\rho=J^*(x)$ is the optimal value of the original MDP, and $\hat{J}(x)$ is the optimal value of the approximate model for the initial state $x$.  
\end{proposition}
\begin{remark}
We note that Assumption \ref{lip_assmp} with $K_\mathcal{T}<1$ guarantees that the optimal value of the original model is constant, such that $J^*(x)=\rho$, for all $x\in\mathds{X}$. However, the same may not hold for the approximate model (see Example \ref{fin_not_constant}).
\end{remark}

\begin{proof}
We first note that under Assumption \ref{lip_assmp} with $K_\mathcal{T}<1$, one can show that (see \cite{demirci2023average} or Theorem 7.3.4 of \cite{yuksel2020control}) the ACOE for the original model admits a solution and that 
\begin{align}
\lim_{\beta\to 1}(1-\beta) J_\beta^*(x) = \rho.\nonumber
\end{align}
Furthermore, we also have that for any finite MDP, and in particular, for the approximate model, we also have that 
\begin{align}
\lim_{\beta\to 1}(1-\beta) \hat{J}_\beta(x) = \hat{J}(x). \nonumber
\end{align}
From Theorem 5 of \cite{KSYContQLearning}, it follows  that 
\begin{align*}
\left|J_\beta^*(x) - \hat{J}_\beta(x)\right| \leq \frac{1}{1-\beta}\left[ \|L_c\|_\infty + \frac{K_c}{1-\beta K_\mathcal{T}}\|L_T\|_\infty\right].
\end{align*}
Hence, we can conclude the proof with a triangle inequality:
\begin{align*}
|J^*(x)-\hat{J}(x)| \leq\lim_{\beta\to 1}|(1-\beta) J_\beta^*(x) -(1-\beta) \hat{J}_\beta(x)| \leq   \|L_c\|_\infty + \frac{K_c}{1- K_\mathcal{T}}\|L_T\|_\infty.
\end{align*}
\end{proof}

\noindent{\bf On Minorization vs. Wasserstein Contraction Conditions.} To bound the difference between the value functions of the original MDP and the approximate MDP, we have considered two alternative conditions:
\begin{itemize}
\item[1.] The transitions are Wasserstein contractive in the sense that  $W_1(\mathcal{T}(\cdot|x,u), \mathcal{T}(\cdot|y,u))\leq K_\mathcal{T} d_\mathds{X}(x,y)$ with $K_\mathcal{T}<1$.
\item[2.] The transitions are Lipschitz continuous and they satisfy a minorization condition, i.e., $\mathcal{T}(\cdot|x,u)\geq \nu(\cdot)$ for a non-trivial measure $\nu(\cdot)$.
\end{itemize}
The first condition implies that the dynamics are contractive on average, which is analogous to contractive behavior in deterministic systems. In particular, under a functional representation of the dynamics via stochastic realizability conditions, we can write the dynamics as
\begin{align*}
X_{t+1}=f(X_t,U_t,W_t)
\end{align*}
for some measurable $f$, and for an i.i.d. noise sequence $W_t$ distributed according to a measure $P(\cdot)$ such that the pushforward of $P(\cdot)$ under $f(x,u,\cdot)$ is the kernel $\mathcal{T}(\cdot|x,u)$. In this representation, if we assume that there exists some function $K:\mathds{W}\times \mathds{U}\to [0,\infty)$ such that for all $x,y\in\mathds{X}$, $u\in\mathds{U}$, $w\in\mathds{W}$, 
\begin{align*}
d_\mathds{X}\left(f(x,u,w),f(y,u,w)\right) \leq K(w,u)d_\mathds{X}(x,y),
\end{align*} 
we then have 
\begin{align*}
W_1(\mathcal{T}(\cdot|x,u), \mathcal{T}(\cdot|y,u)) &\leq \sup_{\|g\|_{Lip}\leq 1} \left| \int g(f(x,u,w)) - g(f(y,u,w))  \right| P(dw)\\
& \leq \int K(w,u)P(dw) d_\mathds{X}(x,y).
\end{align*}
Thus, the Wasserstein contraction assumption for the kernel holds if $ \int K(w,u)P(dw) <1$, that is if the dynamics are contractive on average for different realizations of the noise. 

The second condition indicates that the Wasserstein contraction condition can be avoided if the dynamics are mixing, i.e.,  $\mathcal{T}(\cdot|x,u)\geq \nu(\cdot)$ for a non-trivial measure $\nu(\cdot)$.

We note that the condition $K_\mathcal{T}<1$ guarantees that the average-cost optimal value function for the original model does not depend on the initial state. However, this property may no longer hold for the finite model after discretization.  Consider the following simple example: 
\begin{example}\label{fin_not_constant}
Let $\mathds{X}=[-1,1]$, and let the dynamics be uncontrolled and given by
\begin{align*}
X_{t+1} = \frac{X_t}{2}.
\end{align*}
The cost is simply equal to the state, that is, $c(x)=x$. It is easy to see that the infinite horizon average cost for this problem is given by $\rho=0$ independent of the initial condition. Furthermore, we have that
\begin{align*}
(1-\beta)J_\beta(x) =(1-\beta) \sum_{t=0}^\infty \beta^t \left(\frac{1}{2}\right)^t x =(1-\beta) \frac{1}{1-\beta/2}x \to 0
\end{align*}
independent of the initial state $x$ as $\beta\to 1$. On the other hand, if we discretize the state space by mapping $[-1,0) \to -1$ and $[0,1]\to 1$, the infinite horizon average cost becomes $\hat{\rho}(-1)= -1$ and $\hat{\rho}(1)=1$ which depends on the initial state.  
\end{example} 

For the minorization condition, we first note that if the original model kernel satisfies this condition, then the finite model kernel also satisfies the same condition:
\begin{lemma}\label{minor_fin_lemma}
If $\mathcal{T}(\cdot|x,u)\geq \nu(\cdot)$ for a non-trivial measure $\nu(\cdot)$, then the  transition kernel for the approximate model also satisfies the same condition such that $\hat{\mathcal{T}}(\cdot|x,u)\geq \nu(\cdot)$ where $\hat{\mathcal{T}}$ is defined in (\ref{lifted_model}). Consequently, for the transition probabilities of the finite model defined in (\ref{finite_cost}), there exists some $y_j$ such that $P(y_j|y_0,u_0)>0$ for all $y_0\in\mathds{Y}$ and $u_0\in\mathds{U}$.
\end{lemma}
\begin{proof}
Fix $(x,u)\in\mathds{X}\times\mathds{U}$. Take $i$ such that $x\in B_i$. Then
\begin{align*}
\hat{\mathcal{T}}(\cdot|x,u)=\int_{B_i} \mathcal{T}(\cdot|z,u)\mu_{y_i}(dz)\geq \int \nu(\cdot)\mu_{y_i}(dz)=\nu(\cdot).
\end{align*}
Since $\nu(\cdot)$ is non-trivial, that is since $\nu(\mathds{X})>0$, for any discretization scheme there exists some index $j$  such that $\nu(B_j)>0$, and thus for any $(y_0,u_0)\in\mathds{Y}\times\mathds{U}$,
\begin{align*}
P(y_j|y_0,u_0)=\hat{\mathcal{T}}(B_j|y_0,u_0) \geq \nu(B_j)>0.
\end{align*}
\end{proof}
Even when the original model does not satisfy the minorization condition, it is sometimes possible to construct a discretization scheme which does satisfy the minorization for any rate of discretization:

\begin{example}\label{minor_counter}
Let  $\mathds{X}=[-2,2]$ and consider the dynamics $x_{k+1}=\frac{x_k}{2}+w_k$ where $w_k$ is supported on $\mathds{Q}\cap [-1,1]$  where $\mathds{Q} $ is the set of rationals. For this example, we have that $K_\mathcal{T}=\frac{1}{2}$. The kernel $\mathcal{T}$, however, does not satisfy the minorization condition. Indeed, for $x,x'\in\mathds{X}$
\begin{align*}
\supp(\mathcal{T}(\cdot|x))=\frac{1}{2}x + (\mathds{Q}\cap [-1,1]).
\end{align*}
 In particular, we have that 
 \begin{align*}
 \supp(\mathcal{T}(\cdot|x))\cap \supp(\mathcal{T}(\cdot|x'))=\emptyset \text{ when } |x-x'|\notin \mathds{Q}.
 \end{align*}
 Thus, no uniform minorization measure can exist.
  On the other hand, for any finite partition of $\mathds{X}$ that has a bin which includes an open neighborhood of the point $0$, this bin is visited with positive probability from every other bin. Therefore, the transition kernel of any approximate (finite-state) model based on such  a partition satisfies a minorization condition. 
\end{example}

\noindent{\bf Near Optimality of Approximate Policies.} The previous results give us an upper bound on the difference between the optimal average-cost value functions. We now focus on the performance of policies designed for the approximate models. 
 In this setting, we note that there might be several  policies that achieve the optimal performance for the discretized model under the average cost optimality criterion, and these policies may perform differently when applied to the original problem (see \cite[Section 4]{kara2020robustness}).

Therefore, we work with policies that satisfy the ACOE. In particular, we will find performance loss bounds for a policy $\hat{\pi}$ that satisfies:
\begin{align}\label{acoe_finite}
\hat{\rho}+\hat{h}(x)&=\hat{c}(x,\hat{\pi}(x))+\int \hat{h}(x_1)\hat{\mathcal{T}}(dx_1|x,\hat{\pi}(x))\\
&=\min_{u\in\mathds{U}}\left\{\hat{c}(x,u)+\int \hat{h}(x_1)\hat{\mathcal{T}}(dx_1|x,u)\right\}.\nonumber
\end{align}
Recall from (\ref{add_sub}), that the performance of an approximate policy is related to the difference $J(x,\hat{\pi}) - \hat{J}(x)$ as well as the corresponding value-function difference, for which we have derived upper bounds. It is not immediately clear, however, how to control this term when the policy $\hat{\pi}$ may be discontinuous and the distance between the transition kernels is controlled in the $W_1$ distance only. 
\begin{theorem}\label{pol_err}
Suppose $\hat{\pi}$ satisfies (\ref{acoe_finite}) and is optimal for the approximate model. For a given initial state $x_0\in\mathds{X}$, consider the average-cost of $\hat{\pi}$ in the original MDP denoted by $J(x_0,\hat{\pi})$, and the optimal average-cost value of the original MDP denoted by $J^*(x_0)$. 

 Under Assumptions \ref{mix_kernel} and   \ref{lip_assmp}, these values are independent of the initial state with $\rho(\hat{\pi})=J(x_0,\hat{\pi})$ and $\rho=J^*(x_0)$, for all $x_0\in\mathds{X}$. Fix $\delta$ such that $K_\mathcal{T}<\left(\frac{1}{\kappa}\right)^{\frac{1-\delta}{\delta}}$ where $\kappa:=1-\nu(\mathds{X})>0$, and let $D$ denote the diameter of the state space. Then, we have
\begin{align*}
\rho(\hat{\pi})-\rho\leq   \frac{2}{1-\kappa}\left(  \|L_c\|_\infty +\left(\frac{K_c D^{1-\delta}  \kappa^{1-\delta}}{1-\kappa\left(\frac{K_\mathcal{T}}{\kappa}\right)^\delta}\right) \left(\|L_T\|_\infty\right)^\delta  \right).
\end{align*}
\end{theorem}

\begin{proof}
We start by noting that under Assumption \ref{mix_kernel} and using Lemma \ref{minor_fin_lemma}, we have 
\begin{align}\label{cont_eq}
\mathcal{T}(\cdot|x,u)\geq \nu(\cdot),\qquad \hat{\mathcal{T}}(\cdot|x,u)\geq\nu(\cdot).
\end{align}
In particular, if we define the following operators
\begin{align}\label{operators}
&Th(x):=c(x,\hat{\pi}(x))+\int h(x_1)\mathcal{T}(dx_1|x,\hat{\pi}(x))-\int h(x_1)\nu(dx_1)\nonumber\\
&\hat{T}h(x):=\hat{c}(x,\hat{\pi}(x))+\int h(x_1)\hat{\mathcal{T}}(dx_1|x,\hat{\pi}(x))-\int h(x_1)\nu(dx_1)\nonumber\\
&T^*h(x):=c(x,\pi^*(x))+\int h(x_1)\mathcal{T}(dx_1|x,\pi^*(x))-\int h(x_1)\nu(dx_1),
\end{align}
where $\pi^*$ is optimal for the average cost problem and solves the ACOE. Using (\ref{cont_eq}), one can then show (see (\ref{contractAverageCostArgum})), that these operators are contractions with contraction constant $\kappa:=1-\nu(\mathds{X})<1$. 

Furthermore, these operators admit unique fixed points, say $h(x),\hat{h}(x),h^*(x)$ respectively. As in the proof of Theorem \ref{val_dif2}, using the kernels $R(\cdot|x,u)=\frac{\mathcal{T}(\cdot|x,u)-\nu(\cdot)}{\kappa}$ and  $\hat{R}(\cdot|x,u)=\frac{\hat{\mathcal{T}}(\cdot|x,u)-\nu(\cdot)}{\kappa}$  we write these fixed point equations as:
\begin{align}\label{operators2}
&h(x)=c(x,\hat{\pi}(x))+\kappa\int h(x_1){R}(dx_1|x,\hat{\pi}(x))\nonumber\\
&\hat{h}(x)=\hat{c}(x,\hat{\pi}(x))+\kappa\int \hat{h}(x_1)\hat{{R}}(dx_1|x,\hat{\pi}(x))\nonumber\\
&h^*(x)=c(x,\pi^*(x))+\kappa\int h^*(x_1){R}(dx_1|x,\pi^*(x)).
\end{align}
Note that the above equations are in the same form as  ACOEs, and hence, we have that $J(x,\hat{\pi})=\rho(\hat{\pi})=\int h(x_1)\nu(dx_1)$, $\hat{J}(x)=\hat{\rho}=\int \hat{h}(x_1)\nu(dx_1)$, and $J^*(x)=\rho=\int h^*(x_1)\nu(dx_1)$.
We now write
\begin{align*}
J(x,\hat{\pi})-J^*(x)\leq \left|J(x,\hat{\pi})-\hat{J}(x)\right|+\left|\hat{J}(x)-J^*(x)\right|.
\end{align*}
For the first term, we first study the difference $|h(x)-\hat{h}(x)|$:
\begin{align}\label{h_hhat}
&|h(x)-\hat{h}(x)|=|Th(x)-\hat{T}\hat{h}(x)|\nonumber\\
&\leq |Th(x)-T\hat{h}(x)|+|T\hat{h}(x)-Th^*(x)|+|Th^*(x)-\hat{T}h^*(x)|+|\hat{T}h^*(x)-\hat{T}\hat{h}(x)|\nonumber\\
&\leq \kappa \|h-\hat{h}\|_\infty + \kappa\|\hat{h}-h^*\|_\infty +  \|L_c\|_\infty +  \kappa[h]_{H^\delta} \left(\frac{\|L_T\|_\infty}{\kappa}\right)^\delta
+\kappa \|h^*-\hat{h}\|_\infty
\end{align}
where we  used the fact that the used operators are contractions. Furthermore, we used the following upper bound 
\begin{align*}
&|Th^*(x)-\hat{T}h^*(x)| \leq \bigg|c(x,\hat{\pi}(x))+\kappa\int h^*(x_1){R}(dx_1|x,\hat{\pi}(x))\\
&\qquad\qquad\qquad\qquad\quad- \hat{c}(x,\hat{\pi}(x))-\kappa\int h^*(x_1)\hat{{R}}(dx_1|x,\hat{\pi}(x))\bigg|\\
&\leq \|L_c\|_\infty + \kappa [h^*]_{H^\delta} \left(W_1(R(\cdot|x,\hat{\pi}(x)) , \hat{R}(\cdot|x,\hat{\pi}(x)))\right)^\delta\\
&\leq  \|L_c\|_\infty + \kappa [h^*]_{H^\delta} \left(\frac{W_1(\mathcal{T}(\cdot|x,\hat{\pi}(x)) , \hat{\mathcal{T}}(\cdot|x,\hat{\pi}(x)))}{\kappa}\right)^\delta\\
&\leq  \|L_c\|_\infty + \kappa [h^*]_{H^\delta} \left( \frac{\|L_T\|_\infty}{\kappa} \right)^\delta
\end{align*}
where we used Lemma \ref{hold_ineq_lem}. 
We note that by Lemma \ref{holder_lemma}, $[h^*]_{H^\delta} \leq  \frac{K_c D^{1-\delta}}{1-\kappa\left(\frac{K_\mathcal{T}}{\kappa}\right)^\delta}$.
%We now show that $\|h^*\|_{Lip}\leq\frac{K_c}{1-K_\mathcal{T}}$. Note that $\pi^*$ is the optimal policy and also the selector. We define $h_t(x)$ iteratively such that
%\begin{align*}
%h_{t+1}(x)= \min_u\left(c(x,u) + \int h_{t}(x_1)\mathcal{T}(dx_1|x,u) - \int h_t(x_1)\nu(dx_1)\right).
%\end{align*}
%Note that due to the contraction property of the operator $T^*$, we have that $\|h_t-h^*\|_\infty \to 0$. We claim that $\|h_t\|_{Lip}\leq \sum_{k=0}^{t}K_c K_\mathcal{T}^k$. For $t=0$, it is immediate that $\|h_0\|_{Lip}\leq K_c$. We now assume that $\|h_t\|_{Lip}\leq \sum_{k=0}^{t}K_c K_\mathcal{T}^k$. For any $x,x'$, we have
%\begin{align*}
%&\left|h_{t+1}(x)-h_{t+1}(x')\right|\leq  \sup_u \bigg|c(x,u)+\int h_t(x_1)\mathcal{T}(dx_1|x,u)-\int h_t(x_1)\nu(dx_1)\\
%&\qquad\qquad- c(x',u)-\int h_t(x_1)\mathcal{T}(dx_1|x',u)+\int h_t(x_1)\nu(dx_1)\bigg|\\
%&\leq K_c |x-x'| + K_\mathcal{T} \|h_t\|_{Lip}|x-x'|
%\end{align*}
%which proves the claim that  $\|h_t\|_{Lip}\leq \sum_{k=0}^{t}K_c K_\mathcal{T}^k$. Together with the fact that  $\|h_t-h^*\|_\infty \to 0$, we can conclude that $\|h^*\|_{Lip}\leq\frac{K_c}{1-K_\mathcal{T}}$.
Using Lemma \ref{h_diff_lem}, we also have that 
\begin{align*}
\|h^*-\hat{h}\|_\infty \leq \frac{1}{1-\kappa}  \left(  \|L_c\|_\infty +\left(\frac{K_c D^{1-\delta}  \kappa^{1-\delta}}{1-\kappa\left(\frac{K_\mathcal{T}}{\kappa}\right)^\delta}\right) \left(\|L_T\|_\infty\right)^\delta   \right).
\end{align*}
Combining the above arguments with (\ref{h_hhat}), we obtain
\begin{align*}
\|h-\hat{h}\|_\infty &\leq \frac{1+\kappa}{(1-\kappa)^2}\left(  \|L_c\|_\infty +\left(\frac{K_c D^{1-\delta}  \kappa^{1-\delta}}{1-\kappa\left(\frac{K_\mathcal{T}}{\kappa}\right)^\delta}\right) \left(\|L_T\|_\infty\right)^\delta \right).
\end{align*}
For the difference $\left|\rho(\hat{\pi})-\hat{\rho}\right|$, we then have
\begin{align*}
\left|\rho(\hat{\pi})-\hat{\rho}\right|&= \left|\int h(x_1)\nu(dx_1)-\int \hat{h}(x_1)\nu(dx_1)\right|\leq  \|h-\hat{h}\|_\infty \nu(\mathds{X})\\
&\leq \frac{1+\kappa}{(1-\kappa)}\left(    \|L_c\|_\infty +\left(\frac{K_c D^{1-\delta}  \kappa^{1-\delta}}{1-\kappa\left(\frac{K_\mathcal{T}}{\kappa}\right)^\delta}\right) \left(\|L_T\|_\infty\right)^\delta  \right)
\end{align*}
where we used the identity $\nu(\mathds{X})=1-\kappa$. Finally, combining this bound with the estimates for $|\rho-\hat{\rho}|$  obtained in Theorem \ref{val_dif2}, the proof is complete.
\end{proof}

\noindent{\bf Further Bounds in terms of State-Space Quantization Error.} The performance error of an approximate policy is mainly controlled by the quantities
\begin{align*}
&L_c(x,u):=\left| c(x,u) - \hat{c}(x,u)\right|\nonumber\\
&L_T(x,u):=W_1\left(\mathcal{T}(\cdot|x,u),\hat{\mathcal{T}}(\cdot|x,u)\right).
\end{align*}
This suggests that, in order to minimize the loss, the quantization bins should be chosen so that the deviation of the cost function and the transition kernels is small within each bin.

For the cost function, this can be achieved by constructing quantization bins aligned with the level (or sublevel) sets of 
$c(x,u)$, so that the cost is nearly constant within each bin. However, the transition kernel maps states and actions to the set of probability measures whose pre-image map analysis is far more tedious unless there is further structure on the dynamics. Hence, following the same approach is not effective for the transition kernel. Accordingly, instead of partitioning in terms of the pre-image of the partitions on the space of probability measures, we partition the domain (that is, the state and action sets): The following results show that the error can be made arbitrarily small by choosing a sufficiently high rate of quantization. We define an average loss function $L:\mathds{X}\to\mathds{R}$ induced by the quantization: for $x\in\mathds{X}$ belonging to a quantization bin $B_i$ whose representative state is $y_i$ (i.e., $q(x)=y_i$), a weighted loss function $L(x)$ is defined as
\begin{align}\label{loss_func}
L(x):=\int_{B_i}d_{\mathbb{X}}(x,x') \, \mu_{y_i}(dx').
\end{align}
That is, $L(x)$ can be seen as the mean distance from $x$ to the bin $B_i$ under the measure $\mu_{y_i}$. We denote the uniform bound on the quantization error by $L_{\mathds{X}}$ defined as 
\begin{align*}
L_{\mathds{X}}:=\sup_x L(x).
\end{align*}
We have the following immediate result:
\begin{lemma}\label{lip_lem}
Under Assumption \ref{lip_assmp}, we have that for all $x,u\in\mathds{X}\times\mathds{U}$
\begin{align*}
&L_c(x,u) = |\hat{c}(x,u)-c(x,u)|\leq K_c L_{\mathds{X}},\\
&L_T(x,u)=W_1(\hat{\mathcal{T}}(\cdot|x,u),\mathcal{T}(\cdot|x,u))\leq K_\mathcal{T} L_{\mathds{X}}.
\end{align*}
\end{lemma}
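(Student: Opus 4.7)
The plan is to prove each of the two bounds directly from the definitions of $\hat{c}$ and $\hat{\mathcal{T}}$ (both of which are averages over the bin $B_i$ containing $x$ against the normalized weight measure $\hat{\pi}^*_{y_i}$) by pulling the absolute value (resp.\ the test integral) inside and invoking the respective Lipschitz constants $K_c$ and $K_f$. Fix $x\in\mathbb{X}$, let $B_i$ be the quantization bin with $x\in B_i$ and representative $y_i=q(x)$, and fix $u\in\mathbb{U}$.

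For the cost bound, I would start from $\hat{c}(x,u)=C^*(q(x),u)=\int_{B_i} c(x',u)\,\hat{\pi}^*_{y_i}(dx')$, and use $\int_{B_i}\hat{\pi}^*_{y_i}(dx')=1$ to write $\hat{c}(x,u)-c(x,u)=\int_{B_i}\bigl(c(x',u)-c(x,u)\bigr)\hat{\pi}^*_{y_i}(dx')$. Taking absolute values inside by the triangle inequality and applying the Lipschitz assumption on $c$ from Assumption \ref{lip_assmp} gives
\[
|\hat{c}(x,u)-c(x,u)|\le K_c\int_{B_i} d_{\mathbb{X}}(x',x)\,\hat{\pi}^*_{y_i}(dx')=K_c L(x)\le K_c L_{\mathbb{X}},
\]
using the definition of $L(x)$ in \eqref{loss_func} and the uniform bound $L_{\mathbb{X}}=\sup_x L(x)$.

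For the Wasserstein bound, the plan is to use the Kantorovich--Rubinstein dual formulation. For any $g$ with $\mathrm{Lip}(g)\le 1$, since $\hat{\mathcal{T}}(\cdot|x,u)=\int_{B_i}\mathcal{T}(\cdot|z,u)\,\hat{\pi}^*_{y_i}(dz)$ by definition, Fubini's theorem gives
\[
\int g(x_1)\hat{\mathcal{T}}(dx_1|x,u)-\int g(x_1)\mathcal{T}(dx_1|x,u)=\int_{B_i}\!\Bigl(\!\int g(x_1)\mathcal{T}(dx_1|z,u)-\int g(x_1)\mathcal{T}(dx_1|x,u)\!\Bigr)\hat{\pi}^*_{y_i}(dz).
\]
Bounding the inner bracket by $W_1\bigl(\mathcal{T}(\cdot|z,u),\mathcal{T}(\cdot|x,u)\bigr)$ via the dual formulation, applying the $W_1$-Lipschitz assumption on $\mathcal{T}$ from Assumption \ref{lip_assmp}, and finally taking the supremum over all $1$-Lipschitz $g$ yields
\[
W_1(\hat{\mathcal{T}}(\cdot|x,u),\mathcal{T}(\cdot|x,u))\le K_f\int_{B_i} d_{\mathbb{X}}(z,x)\,\hat{\pi}^*_{y_i}(dz)=K_f L(x)\le K_f L_{\mathbb{X}}.
\]

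There is no real obstacle here: the only nontrivial step is passing to the dual formulation of $W_1$ and justifying Fubini, both of which are standard given boundedness of the test functions on the compact $\mathbb{X}$ and the Lipschitz continuity of $\mathcal{T}$ in $W_1$. The bounds are uniform in $u$ because the Lipschitz constants in Assumption \ref{lip_assmp} do not depend on $u$.
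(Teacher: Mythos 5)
Your proof is correct and follows essentially the same route as the paper's: both bounds are obtained by writing $\hat{c}$ and $\hat{\mathcal{T}}$ as averages over the bin against $\hat{\pi}^*_{y_i}$, pulling the difference inside the integral (via the dual formulation of $W_1$ for the kernel), and applying Assumption \ref{lip_assmp} together with the definition of $L(x)$. Your write-up is merely a bit more explicit about Fubini and the Kantorovich--Rubinstein duality, which the paper leaves implicit.
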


%\sy{Observe the above bound does not assume a uniform quantization, and is applicable also for non-compact state spaces.}

\begin{proof}
Let $x\in B_i$. For the cost difference, we write
\begin{align*}
&|\hat{c}(x,u)-c(x,u)|=\left|\int_{B_i}c(x',u)\mu_{y_i}(dx')-c(x,u)\right|=\left|\int_{B_i}c(x',u)-c(x,u)\mu_{y_i}(dx')\right|\\
& \leq\int_{B_i}K_c d_{\mathbb{X}}(x,x') \mu_{y_i}(dx')\leq K_c L_{\mathds{X}}.
\end{align*}

For the transition difference, for any $\|f\|_{Lip}\leq 1$, we similarly write
\begin{align*}
&\left|\int f(x_1)\hat{\mathcal{T}}(dx_1|x,u)-\int f(x_1)\mathcal{T}(dx_1|x,u)\right|\\
&=\left|\int \int_{B_i} f(x_1)\mathcal{T}(dx_1|x',u)\mu_{y_i}(dx')-\int f(x_1)\mathcal{T}(dx_1|x,u)\right|\\
&\leq \int_{B_i}K_\mathcal{T} {d_{\mathbb{X}}(x,x')} \mu_{y_i}(dx')\leq K_\mathcal{T} L_{\mathds{X}}.
\end{align*}
\end{proof}

\begin{corollary}\label{pol_err_cor}{(of Theorem \ref{pol_err})}
Suppose $\hat{\pi}$ satisfies (\ref{acoe_finite}). Under Assumptions \ref{mix_kernel} and   \ref{lip_assmp}, for any $\delta$ such that $K_\mathcal{T}<\left(\frac{1}{\kappa}\right)^{\frac{1-\delta}{\delta}}$, we have that
\begin{align*}
\rho(\hat{\pi})-\rho\leq    \frac{2K_c D^{1-\delta}}{(1-\kappa)\left(1-\kappa \left(\frac{K_\mathcal{T}}{\kappa}\right)^\delta\right)}L_\mathds{X}^\delta.
\end{align*}
\end{corollary}

\begin{remark} 
The result is stated for finite action spaces. However, we can easily get a further upper bound for continuous action spaces by combining Theorem \ref{weak:mainthm2} and Theorem \ref{pol_err} (or Corollary \ref{pol_err_cor}). Let $\hat{\pi}$ denote the policy designed for discretized action and state spaces, and let $\rho(\hat{\pi})$ denote the average cost we would receive if we applied the policy $\hat{\pi}$ to the original model. $\rho$ is the optimal value for the original state and action spaces, and ${\rho'}$ denotes the value of the problem with a discretized action space and a continuous state space. We can then write
\begin{align*}
\rho(\hat{\pi}) - \rho \leq \left|\rho(\hat{\pi}) - {\rho'}\right| + \left| {\rho'} - \rho\right| \leq  \frac{2K_c D^{1-\delta}}{(1-\kappa)\left(1-\kappa \left(\frac{K_\mathcal{T}}{\kappa}\right)^\delta\right)}L_\mathds{X}^\delta +  \frac{K_cD^{1-\delta}}{1-\kappa\left(\frac{K_\mathcal{T}}{\kappa}\right)^\delta}\E(\Lambda)^\delta
\end{align*}
where we have used Corollary \ref{pol_err_cor} for the first term, and Theorem \ref{weak:mainthm2}(iii)  for the second term (see (\ref{act_error}) for the definition of $\E(\Lambda)$).
\end{remark}

\subsubsection{Finite Approximations via Weak Continuity and Asymptotic Optimality}
\adk{In Section \ref{FinMDPWassM}, we used Assumption \ref{lip_assmp} to obtain quantitative bounds on approximation errors using the Lipschitz continuity of the cost function and the transition kernel. In this section, we relax the Lipschitz continuity assumption and focus on asymptotic analysis only under continuity of the cost, and weak continuity of the kernel. We emphasize that Lipschitz continuity conditions are generally more restrictive; for example in the belief-MDP reduction for partially observable Markov Decision Processes (POMDPs), there is a significant difference between models which allow for weak Feller continuity vs. those which satisfy Lipschitz regularity (see the tutorial paper \cite{tutorialkara2024partially}); an analogous situation applies for problems where lifting to a larger space (such as probability measure valued dynamics) is utilized as in decentralized stochastic control problems, as well as mean-field models}. In this subsection, we work with Assumption \ref{weak:as} specialized to finite action spaces:
\begin{assumption}
\label{weak:as1}
\begin{itemize}
\item [(a)] $\mathbb{U}$ is finite.
\item [(b)] $\mathbb{X}$ is compact.
\item [(c)] The one-stage cost function $c$ is bounded and continuous.
\item [(d)] The stochastic kernel ${\cal T}(\,\cdot\,|x,u)$ is weakly continuous in $(x,u) \in {\mathbb X} \times \mathds{U}$ (that is, weak Feller continuous).
\end{itemize}
\end{assumption}
In this section, we will apply a uniform quantization of the compact state space with bin diameter $\frac{1}{n}$ so that 
\begin{align*}
L_\mathds{X}=\sup_x L(x) \leq \frac{1}{n}
\end{align*}
where $L(x)$ is defined in (\ref{loss_func}).

In Section 4.2.2 of \cite{SaLiYuSpringer} total variation continuity was imposed for near optimality of quantized models under the average cost criterion. In the previous subsection, this was relaxed to Wasserstein continuity. The convergence result along the same lines can be shown to work under only Assumptions \ref{mix_kernel} and \ref{weak:as1}, albeit without a modulus of continuity. 
\begin{theorem}\label{weakContApprAsy}
Let $\pi_n$ denote an optimal policy for the quantized model which solves the ACOE for the approximate model with discretization width $1/n$.
For a given initial state $x_0\in\mathds{X}$, let ${J}(x_0,\pi_n)$ denote the average cost of the policy $\pi_n$ evaluated under the original model and let $J^*(x_0)$ denote the optimal average cost.

Under Assumption \ref{mix_kernel}, these values are independent of the initial state; we write $\rho(\pi_n)={J}(x_0,\pi_n)$ and $\rho=J^*(x_0)$ for all $x_0\in\mathds{X}$.
Then, under Assumptions  \ref{mix_kernel} and \ref{weak:as1}, we have
\[\lim_{n\rightarrow\infty} \rho(\pi_n) =\rho.\]
\end{theorem}

\begin{proof}
The  proof relies on the contraction operators and the corresponding fixed point equations used in the proof of Theorem  \ref{pol_err}, i.e. (\ref{operators2}). In particular, we consider the fixed point equations:
\begin{align*}
&h_n(x)=c(x,{\pi_n}(x))+\kappa\int h_n(x_1){R}(dx_1|x,{\pi_n}(x))\\
&h^*(x)=c(x,\pi^*(x))+\kappa\int h^*(x_1){R}(dx_1|x,\pi^*(x))
\end{align*}
where $R(\cdot|x,u):=\frac{\mathcal{T}(\cdot|x,u)-\nu(\cdot)}{\kappa}$ with $\kappa=1-\nu(\cdot)<1$. Note that $\pi_n$ is an optimal policy for the approximate cost function $c_n$ and the approximate kernel $\mathcal{T}_n$ under the average cost criterion and equivalently under a discounted cost criterion with discount factor $\kappa$. Furthermore, the  fixed point $h_n$ corresponds to the discounted cost value of the policy $\pi_n$ under the original model with a discount factor $\kappa$, and $h^*$ corresponds to the optimal discounted value function for the original dynamics under the discount factor $\kappa$.

We can then use Theorem 4.4 of \cite{kara2020robustness} to show that $h_n\to h^*$ point-wise under Assumption \ref{weak:as1}, if 
\begin{itemize}
\item ${R}_n(\cdot|x_n,u)\to R(\cdot|x,u)$ weakly for any $x_n\to x$ and for all $u$
\item $c_n(x_n,u)\to c(x,u)$ for any $x_n\to x$ and for all $u$.
\end{itemize} 
For the first item, it is equivalent to show that $\mathcal{T}_n(\cdot|x_n,u)\to \mathcal{T}(\cdot|x,u)$ weakly. For the cost function we have that:
\begin{align*}
c_n(x_n,u) = \int_{B_{n,i}} c(z,u)\mu_n (dz)
\end{align*}
where $B_{n,i}$ denotes the quantization bin $x_n$ belongs to and $\mu_n$ is the weight measure $\mu$ concentrated on the set $B_{i,n}$. Thus, we need to show that for any fixed $\epsilon>0$,  we can find a large enough $N<\infty$ such that for $n>N$, we have that $\left|c(z,u) - c(x,u)\right|< \epsilon$   for all $z\in B_{i,n}$.

For fixed $\epsilon >0$, we can find $\delta>0$ such that $|c(x,u) -c(z,u)|< \epsilon $ for all $d_\mathds{X}(x,z)<\delta$ since $c(x,u)$ is continuous by assumption. Thus, we now want to find a sufficiently large $N<\infty$ such that for such a $\delta$, $d_{\mathds{X}}(x,z)< \delta$ for all $z\in B_{i,n}$ for $n\geq N$. Recall that $B_{n,i}$ represents the quantization bin $x_n$ belongs to, and by construction we have that $d_\mathds{X}(z,x_n)\leq \frac{1}{n}$ for all $z\in B_{i,n}$ which can be made smaller than $\delta/2$ for all $n\geq N_1$ for a sufficiently large $N_1$. Furthermore, $x_n\to x$ by assumption, and thus we can make $d_\mathds{X}(x_n,x)< \delta/2$ for all $n\geq N_2$ for some other sufficiently large $N_2$. Picking the greater of $N_1$ and $N_2$ implies that 
\begin{align*}
d_{\mathds{X}}(x,z) \leq d_\mathds{X}(x,x_n) + d_\mathds{X}(x_n,z) < \delta
\end{align*} 
for all $n\geq \max(N_1,N_2)$ and for all $z\in B_{i,n}$, which proves the claim that $c_n(x_n,u)\to c(x,u)$ for all $x_n \to x$.

Using identical arguments and noting that for any continuous and bounded $f$, we have that 
\begin{align*}
&\left| \int f(x_1)\mathcal{T}_n(dx_1|x_n,u) - \int f(x_1)\mathcal{T}(dx_1|x,u)\right|\\
&=\left| \int_{B_{n,i}} \int f(x_1)\mathcal{T}_n(dx_1|z,u)\mu_n(dz)  - \int f(x_1)\mathcal{T}(dx_1|x,u)\right|
\end{align*}
we can also conclude that $\mathcal{T}_n(\cdot|x_n,u)\to \mathcal{T}(\cdot|x,u)$ weakly for all $x_n \to x$. By Theorem 4.4 of \cite{kara2020robustness}, this shows that $h_n\to h^*$. 

We conclude the proof by using the Dominated Convergence Theorem and noting that $\rho(\pi_n)=\int h_n(x_1)\nu(dx_1)$ and $\rho=\int h^*(x_1)\nu(dx_1)$.
\end{proof}

We now summarize the main results of this section as follows:
\begin{theorem}\label{main_thm}
Suppose $\hat{\pi}$ satisfies (\ref{acoe_finite}) and is optimal for the approximate model. For a given initial state $x_0\in\mathds{X}$, let $J(x_0,\hat{\pi})$ denote the average cost of $\hat{\pi}$ in the original MDP, $J^*(x_0)$ the optimal average cost of the original MDP, and $\hat{J}(x_0)$ the optimal average cost of the approximate MDP.

Under Assumption \ref{mix_kernel}, these values are independent of the initial state; we write $\rho(\hat{\pi}):=J(x_0,\hat{\pi})$, $\hat{\rho}:=\hat{J}(x_0)$, and $\rho:=J^*(x_0)$ for all $x_0\in\mathds{X}$.

Fix $\delta$ such that $K_\mathcal{T}<\left(\frac{1}{\kappa}\right)^{\frac{1-\delta}{\delta}}$ where $\kappa:=1-\nu(\mathds{X})>0$, and let $D$ denote the diameter of the state space.
\begin{itemize}
\item[i.] Under Assumption \ref{lip_assmp} with $K_\mathcal{T}<1$,
\begin{align*}
|\rho-\hat{\rho}|\leq  \frac{K_c}{1-K_\mathcal{T}}L_\mathds{X}.
\end{align*}
\item[ii.] Under Assumptions \ref{mix_kernel} and \ref{lip_assmp},
\begin{align*}
\left|{\rho}-\hat{\rho}\right|\leq\frac{K_cD^{1-\delta}}{1-K_\mathcal{T}^\delta\kappa^{1-\delta}}\left(L_\mathds{X}\right)^\delta
\end{align*}
and furthermore,
\begin{align*}
\rho(\hat{\pi})-\rho\leq \frac{2K_cD^{1-\delta}}{(1-\kappa)\left(1-K_\mathcal{T}^\delta\kappa^{1-\delta}\right)}\left(L_\mathds{X}\right)^\delta.
\end{align*}
\item[iii.] If $L_\mathds{X}=\frac{1}{n}$ (which is possible since $\mathds{X}$ is compact), then denoting the learned policy by $\pi_n$, under Assumptions \ref{mix_kernel} and \ref{weak:as1},
\[\lim_{n\rightarrow\infty} \rho(\pi_n) =\rho.\]
\end{itemize}
\end{theorem}

Now that we have presented the contraction framework needed for our analysis, we will move on to establishing a Q-learning algorithm and its convergence to near-optimality.

\section{Quantized Q-Learning for Continuous Spaces under the Infinite Horizon Average Cost Criterion}\label{q_cont_sec}

In this section, we study Q-learning for a continuous MDP under a discretization map, and investigate whether the algorithm converges and, if so, whether its limit corresponds to the approximate MDP models constructed in the previous sections.

In particular we will present synchronous and asynchronous Q-learning algorithms for systems with continuous spaces and show the convergence of these algorithms to the value functions of appropriately defined finite MDP models consistent with those constructed in Section \ref{finite_state_sec}.  

We denote by $Q^*:\mathds{Y}\times\mathds{U}\to \mathds{R}$ the Q-values (factors, or functions) for the finite model introduced in Section \ref{finite_state_sec} for some weight measure $\mu\in\P(\mathds{X})$. For any discretized state $y_i\in\mathds{Y}$ and any control action $u\in\mathds{U}$, the Q value of the pair $(y_i,u)$ satisfies the following equality:
\begin{align}\label{finite_Q}
\hat{\rho} + Q^*(y_i,u)=C(y_i,u)+ \sum_{y_j\in\mathds{Y}} P(y_j|y_i,u)\min_{v \in \mathds{U}}Q^*(y_j,v)
\end{align}
where $P$ and $C$ are defined in (\ref{finite_cost}) and $\hat{\rho}$ is the value of the average cost problem for the finite model.

Theorem \ref{pol_err} provides bounds on the performance loss of the policies designed for discretized models (and Theorem \ref{weakContApprAsy} generalizes this to the case with only weakly continuous models, though with only asymptotic convergence). Hence, if we can find iterations that converge to the Q values in (\ref{finite_Q}), we will be able to obtain performance results through Theorems \ref{pol_err} and \ref{weakContApprAsy}.

In the following, we present a synchronous and an asynchronous algorithm. It is important to note that the quantized process $q(X_t)$ is not an MDP, and in fact should be viewed as a POMDP, a perspective used by \cite{kara2021convergence} \citep[see also][]{KSYContQLearning}. 

For the asynchronous setup, we work with a single trajectory of the original process, and the data are given sequentially so that we cannot rely on the Markov properties. Accordingly, we generalize the proof method given by \cite{kara2021convergence} for the average cost criterion and impose ergodicity properties under an exploration policy.

\subsection{Synchronous Quantized Q-Learning for Continuous Space Average Cost MDPs}\label{syn_q_sec} 

We first present a synchronous Q-learning algorithm. As noted above, the proof is more direct in this case. In the following, we present the synchronous Q-learning algorithm, which we will use in this section. Note that the discretization part of the algorithm follows the same steps as introduced in Section \ref{finite_state_sec}.

Recall the quantization of the state space with a collection of disjoint sets $\{B_i\}_{i=0}^{M-1}$ such that $\bigcup_i B_i=\mathds{X}$, and $B_i\bigcap B_j =\emptyset$ for any $i\neq j$ with a representative state, $y_i\in B_i$, for each disjoint set. Denote the new finite state space by
$\mathds{Y}:=\{y_0,y_1,\dots,y_{M-1}\}$.

\begin{algorithm}[H]
\caption{Synchronous Quantized Q-Learning}
\label{Qit1}
\begin{algorithmic}
\STATE Input: $Q_0$, $\mathds{Y}$, $\mathds{U}$, $\{\mu_y\}_{y\in\mathds{Y}}$, reference $(y_0,u_0)$
\FOR{$t=0,\ldots,L-1$}
\FOR{each $(y_i,u_j)\in\mathds{Y}\times\mathds{U}$}
\STATE Sample $x_i\sim\mu_{y_i}$, $X_1^{i,j}\sim\mathcal{T}(\cdot|x_i,u_j)$
\STATE Update
\begin{align}\label{q_it_syn}
Q_{t+1}(y_i,u_j)=(1-\alpha_t)Q_t(y_i,u_j)
+\alpha_t\Big(c(x_i,u_j)+\min_v Q_t(q(X_1^{i,j}),v)\Big)
\end{align}
\ENDFOR
\STATE Normalize $\hat{Q}_{t+1}(y_i,u_j)= Q_{t+1}(y_i,u_j)-Q_{t+1}(y_0,u_0)$

\ENDFOR
\RETURN $\hat Q_L$
\end{algorithmic}
\end{algorithm}

The algorithm updates the Q-values for all $(y,u)\in\mathds{Y}\times\mathds{U}$ pairs synchronously. To perform the updates for a discrete state $y_i$ (or quantization bin $B_i$) and an action $u_j$,  a \emph{continuous} state is sampled from the bin $B_i$ according to a pre-determined measure $x_i\sim\mu_{y_i}(\cdot)$. Along with this sampled state, the corresponding future state  $X_1^{ij}$ is also taken from the dataset. The update in (\ref{q_it_syn}) then uses the future continuous state $X_1^{ij}$ and the cost $c(x_i,u_j)$ corresponding to the sampled state $x_i$ under the action $u_j$. Finally, the Q-values are normalized by subtracting $Q_t(y_0,u_0)$ for a pre-selected reference pair $(y_0,u_0)$ from all Q-values.

%that is, $\mu_{y_i}^*$ is the normalized weight measure on the set $B_i$, where $y_i$ belongs to. 

%We will study the key step in the algorithm, which is the iteration:
%\begin{align}\label{q_it_syn}
%Q_{t+1}(y_i,u_i)=(1-\alpha_t(y_i,u_i)) Q_t(y_i,u_i)+\alpha_t(y_i,u_i) \left[C_i+\min_{v\in\mathds{U}}Q_t(q(X_1^i),v) - m_t \right]
%\end{align}
%where $m_t:= C_0 +\min_{v\in\mathds{U}}Q_t(q(X_1^0),v)$.

%Note that even though the algorithm is synchronous for the state variables, the action variables are not updated synchronously, hence, we still keep a different learning rate and a counter for different state-action pairs.

The next result shows that these iterations converge to the Q values given in (\ref{finite_Q}) for an appropriate weight measure. 

\begin{theorem}\label{syn_th}

We assume that the approximate transition probabilities for the finite model satisfy for all $y,y',u,u'$
\begin{align}\label{sp_cont}
\frac{1}{2}\sum_j \left|P(y_j|y,u) - P(y_j|y',u')\right| \leq \beta <1 .
\end{align}
If $\alpha_t=\frac{1}{t+1}$ and the iterations are given by (\ref{q_it_syn}), then $Q_t$ converges to $Q^*$ (see (\ref{finite_Q})) under the span semi-norm, and $\hat{Q}_t$ converges to $\hat{Q}^*$ under the uniform norm where $\hat{Q}(y,u)=Q(y,u)-Q(y_0,u_0)$ for a predetermined pair $(y_0,u_0)$. We also have that
\begin{align*}
\hat{\rho}+\hat{V}^*(y_0)=C(y_0,u_0)+\sum_{y_1\in\mathds{Y}} \hat{V}^*(y_1) P(y_1|y_0,u_0)
\end{align*}
where $\hat{V}^*(y)=\min_{u}\hat{Q}^*(y,u)$ and where $\hat{\rho}$ is the value of the approximate model introduced in Section \ref{finite_state_sec} under the average cost  criterion. Accordingly, the results of Theorem \ref{main_thm} follow. 
\end{theorem}

A proof is given in Appendix \ref{proofSynQQL}. We note that under Assumption \ref{mix_kernel}, (\ref{sp_cont}) always holds. In particular, if $\mathcal{T}(\cdot|x,u)\geq \nu(\cdot)$, then Lemma \ref{minor_fin_lemma} implies that $P(y_j|y,u)>0$ for some $y_j$ and for all $y,u$. This then implies the condition (\ref{sp_cont}). On the other hand, there may be settings where the original problem does not satisfy the minorization condition in Assumption \ref{mix_kernel}, however, one may find discretization schemes which do satisfy condition (\ref{sp_cont}). Indeed, Example \ref{minor_counter} is a case where the original model does not satisfy Assumption \ref{mix_kernel}; however, one can always find a discretization which does satisfy (\ref{sp_cont}). Accordingly, for the convergence result, we impose the weaker condition, that is (\ref{sp_cont}). Nonetheless, to show the near optimality of the learned policies, we still need Assumption \ref{mix_kernel}.

\subsection{Asynchronous Quantized Q-Learning for Continuous Space Average Cost MDPs}\label{asyn_q_sec}

The Q-learning algorithm we presented earlier is constructed under the assumption that we can generate samples from every quantization bin synchronously. We now present an algorithm that  learns the Q-values and an optimal policy of the finite model constructed in Section \ref{finite_state_sec}, from a single trajectory. In this setting, the main challenge is that only a single trajectory of the original continuous state MDP is available. The question is whether  Q-learning applied with the quantized process converges.

The decision maker applies an arbitrary admissible policy $\pi$  and collects realizations of quantized states, actions, and stage-wise cost under this policy:
$
Y_0,U_0,c(X_0,U_0),Y_1,U_1,c(X_1,U_1) \ldots 
$
where $Y_t$ denotes the representative (quantized) state corresponding to $X_t$.

\begin{assumption}\label{mix_kernel2}
For the finite model transition probabilities $P(y_j|y,u)$ (see (\ref{finite_cost})), where the weighting measure is given by the invariant measure of the state process under the exploration policy, there exists a bin $B_j$, with representative state $y_j$, such that
\[P(y_j|y,u) >0,\]
 for every $y \in \mathds{Y}$ and $u\in\mathds{U}$. Furthermore, the index $j$ of this bin is known. 
%we assume that it is known or can be estimated which bins have this property. 
\end{assumption}

 %\begin{remark}\label{pos_est}
%For the above assumption $(ii)$, if it is known which bin or bins satisfy $\nu(y_j)>0$, e.g. if one can empirically estimate which bin can be visited with strict positive probability from any state, then the summation $\delta\sum_{y' \in \mathds{Y}}V_t(y')$ in  (\ref{asyncQQLA})  can be replaced by a sum that goes over such bins. In particular, assume that it is known that a subset of quantization bins has positive measure under $\nu(\cdot)$. Let us denote these bins by $\{B_{i_1},\dots,B_{i_k}\}$, and the representative quantized states by $\{y'_{1},\dots,y'_{k}\}$. Then, one can replace the term $\delta\sum_{y' \in \mathds{Y}}V_t(y')$ in  (\ref{asyncQQLA}) by $\delta\sum_{j=1}^kV_t(y'_j)$.
  %\end{remark}

We propose a shifted Q-learning algorithm by subtracting the value $\delta V_t(y_j)$ for some sufficiently small  $\delta>0$, where $V_t(y')=\min_uQ_t(y',u)$. This yields, for all $(y,u)\in\mathds{Y}\times\mathds{U}$
\begin{align}\label{asyn_q_it}
&Q_{t+1}(y,u)=(1-\alpha_t(y,u)) \, Q_t(y,u)+\alpha_t(y,u)\left(c(X_t,U_t)+ \min_{v \in \mathds{U}} Q_t(Y_{t+1},v)-\delta V_t(y_j)\right).
\end{align}
%The algorithm is summarized as follows:
%\begin{algorithm}[H] 
%\caption{Learning Algorithm: Asynchronous Quantized Q-Learning}
%\label{Qit1}

%\begin{algorithmic}
%\STATE{Input: $Q_0$ (initial $Q$-function), $q:\mathds{X}\rightarrow\mathds{Y}$ (quantizer), $\pi$ (exploration policy), $L$ (number of data points), $\{N(y,u)=0\}_{(y,u) \in \mathds{\mathds{Y}\times\mathds{U}}}$ (number of visits to discrete state-action pairs), $\delta>0$}.
%\STATE{Start with $Q_0$}
%\FOR{$t=0,\ldots,L-1$}
%\STATE{
%\begin{itemize}
%\item[\ding{43}] If $(X_t,U_t)$ is the current state-action pair $\Longrightarrow$ observe the cost $c(X_t,U_t)$ and the next state $X_{t+1} \sim \mathcal{T}(\,\cdot\,|X_t,U_t)$, and set 
%$$N(q(X_t),U_t) = N(q(X_t),U_t)+1.$$ 
%\item[\ding{43}] Update $Q$-function $Q_t$  as follows:\\
%\begin{align}
%&Q_{t+1}(y,u)=(1-\alpha_t(y,u)) \, Q_t(y,u) \nonumber \\
%&\phantom{xxxxxxxxxxxxxxxx}+\alpha_t(y,u)\left(c(X_t,U_t)+ \min_{v \in \mathds{U}} Q_t(q(X_{t+1}),v) -  \delta\sum_{j=1}^kV_t(y'_j) \right), \label{asyncQQLA}
%\end{align}
%where 
%$
%\alpha_t(y,u) = \frac{1}{1+N(q(X_t),U_t)},  \text{ if } (q(X_t),U_t)=(y,u) \text{ and } \alpha_t(y,u) = 0 \text{ otherwise }
%$
%and $V_t(y)=\min_uQ_t(y,u)$.

%\item[\ding{43}] Generate  $U_{t+1} \sim \pi(\cdot | q(X_{t+1}))$.
%\end{itemize}
%}
%\ENDFOR
%\RETURN{$Q_L$}
%\end{algorithmic}
%\end{algorithm}

\begin{algorithm}[H]
\caption{Asynchronous Quantized Q-Learning}
\label{Qit_async}
\begin{algorithmic}
\STATE \textbf{Input:} $Q_0$, quantizer $q:\mathds{X}\to\mathds{Y}$, exploration policy $\pi$, horizon $L$, parameter $\delta>0$
\STATE Initialize $N(y,u)=0$ for all $(y,u)\in\mathds{Y}\times\mathds{U}$
\STATE Set $Q_0$

\FOR{$t=0,\ldots,L-1$}
\STATE Observe $(X_t,U_t,c(X_t,U_t),X_{t+1})$
\STATE $y \gets q(X_t)$, \quad $y' \gets q(X_{t+1})$
\STATE $N(y,U_t) \gets N(y,U_t)+1$
\STATE $\alpha_t(y,U_t) \gets \frac{1}{1+N(y,U_t)}$
\STATE $Q_{t+1}(y,U_t) \gets (1-\alpha_t(y,U_t))Q_t(y,U_t)+\,\alpha_t(y,U_t)\!\left(c(X_t,U_t)+\min_{v}Q_t(y',v)-\delta V_t(y_j)\right)$
\STATE $U_{t+1} \sim \pi(\cdot|y')$
\ENDFOR
\RETURN{$Q_L$}
\end{algorithmic}
\end{algorithm}
We impose the following assumptions for convergence

\smallskip

\begin{assumption}\label{asyn_q}
\hfill
\begin{itemize}
\item [(1.)] We set $\alpha_t(y,u)=0$ unless $(Y_t,U_t)=(y,u)$. Otherwise, let
\[\alpha_t(y,u) = {1 \over 1+ \sum_{k=0}^{t} 1_{\{Y_k=y, U_k=u\}} }.\]
\item [2.] Under the (memoryless or stationary) exploration policy $\pi(\cdot|\cdot)$,  $X_t$ is uniquely ergodic and thus has a unique invariant measure $\mu_{\pi}$ such that $\mu_\pi(B_i)>0$ for every quantization bin $B_i$. % in total variation, i.e $\|Pr(X_t\in\cdot)-\mu_{\pi^*}(\cdot)\|\to 0$. \sy{Under the exploration policy $\pi^*$,  the distribution of state process converges to the invariant measure $\mu_{\pi^*}$ weakly, i.e $Pr(X_t\in\cdot) \to \mu_{\pi^*}(\cdot)$ weakly.}
\item [3.] The exploration policy $\pi$ has full support over actions at every quantized state: $\pi(u|y)>0$ for all $(y,u)\in\mathds{Y}\times\mathds{U}$.
\end{itemize}
\end{assumption}

We note that a sufficient condition for the second item in Assumption~\ref{asyn_q} is that the state process $\{X_t\}_{t\geq0}$ is positive Harris recurrent under the exploration policy.  In particular, together with the minorization condition in Assumption \ref{mix_kernel2}, the process becomes positive Harris recurrent. Item~(3) is a  full support exploration condition. It is satisfied, for example, by any $\epsilon$-greedy policy with $\epsilon>0$ or by a uniformly randomized policy over the finite action set $\mathds{U}$. Together, Items~(2) and~(3) ensure that every quantized state-action pair $(y,u)\in\mathds{Y}\times\mathds{U}$ is visited infinitely often almost surely, which is required both for the step-size conditions of Lemma~\ref{czaba_lemma} and for the empirical-average convergence in the proof of Theorem~\ref{quant_q}.

For the discounted cost criterion, an analogous result was proven by \cite{kara2021convergence} (see also related results by  \cite{CsabaSmart} and \cite{singh1994learning}).
We first recall a key result by \citet[Prop. 4.5]{BertsekasTsitsiklis} and by \citet[Lemma 1]{singh2000convergence}. %A concise proof is given in the appendix.
%, followed by the main result of this section:
\begin{lemma}\label{czaba_lemma}\cite[Prop. 4.5]{BertsekasTsitsiklis}\cite[Lemma 1]{singh2000convergence}
Consider a stochastic process $(\alpha_t,\Delta_t,F_t),$ $t\geq 0$, where $\alpha_t,\Delta_t,F_t:\mathds{S}\to\mathds{R}$ for some finite set $\mathds{S}$ and satisfy the equations
\begin{align*}
\Delta_{t+1}(s)=(1-\alpha_t(s))\Delta_t(x)+\alpha_t(s)F_t(s)
\end{align*}
Let $P_t$ be a sequence of increasing $\sigma$-fields such that $\alpha_0$ and $\Delta_0$ are $P_0$ measurable and $\alpha_t,\Delta_t,F_{t-1}$ are $P_t$ measurable.  Assume that the following hold:
\begin{itemize}
\item $\sum_t \alpha_t(s)=\infty$, $\sum_t \alpha_t^2(s)<\infty$ almost surely,
\item $\left|E[F_t(\cdot)|P_t]\right|_\infty \leq \beta\|\Delta_t\|_\infty + c_t$ where $\beta<1$ and $c_t$ converges to zero almost surely.
\item $Var(F_t(s)|P_t)\leq K(1+\|\Delta_t\|_\infty)^2$ for some constant $K<\infty$.
\end{itemize}
Then $\Delta_t$ converges to zero almost surely.
\end{lemma}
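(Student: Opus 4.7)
The plan is to follow the two-scale decomposition strategy due to Jaakkola–Jordan–Singh, adapted to allow the extra drift term $c_t$. Write $F_t(s) = f_t(s) + \delta_t(s)$ where $f_t(s) := E[F_t(s)\mid P_t]$ and $\delta_t(s) := F_t(s)-f_t(s)$ is a martingale-difference sequence with respect to $\{P_t\}$. Introduce two parallel recursions on the finite index set $\mathds{S}$:
\begin{align*}
U_{t+1}(s) &= (1-\alpha_t(s))U_t(s) + \alpha_t(s)\delta_t(s),\\
W_{t+1}(s) &= (1-\alpha_t(s))W_t(s) + \alpha_t(s)f_t(s),
\end{align*}
initialized so that $U_0 = 0$ and $W_0 = \Delta_0$. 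By linearity $\Delta_t = U_t + W_t$, so it suffices to prove $U_t \to 0$ and $W_t \to 0$ almost surely for each $s \in \mathds{S}$.

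First I would handle $U_t$. Compute $E[U_{t+1}(s)^2\mid P_t] \le (1-\alpha_t(s))^2 U_t(s)^2 + \alpha_t(s)^2 \operatorname{Var}(F_t(s)\mid P_t)$. Using the variance bound, this gives $E[U_{t+1}(s)^2\mid P_t] \le U_t(s)^2 - (2\alpha_t(s)-\alpha_t(s)^2)U_t(s)^2 + K\alpha_t(s)^2(1+\|\Delta_t\|_\infty)^2$. The argument now bifurcates: I would first argue boundedness of $\|\Delta_t\|_\infty$ by a Gronwall-style recursion using $\|\Delta_{t+1}\|_\infty \le (1-\alpha_t)\|\Delta_t\|_\infty + \alpha_t(\beta\|\Delta_t\|_\infty + c_t + |\delta_t|)$ plus the summability of $\alpha_t^2$ and of $\alpha_t^2 |\delta_t|^2$ moments, controlling the noise by the martingale maximal inequality. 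Once $\|\Delta_t\|_\infty$ is almost surely bounded, the extra noise term $K\alpha_t^2(1+\|\Delta_t\|_\infty)^2$ is summable, and the Robbins–Siegmund supermartingale convergence theorem yields $U_t(s)^2 \to U_\infty(s)^2$ almost surely and $\sum_t \alpha_t U_t(s)^2 < \infty$; combined with $\sum_t \alpha_t = \infty$ this forces $U_\infty(s) = 0$.

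Next, for $W_t$, I would exploit the sup-norm contraction. Fix $\varepsilon>0$ and choose $t_0$ large enough that $\|U_t\|_\infty \le \varepsilon$, $c_t \le \varepsilon$ for all $t\ge t_0$. Using $|f_t(s)| \le \beta\|\Delta_t\|_\infty + c_t \le \beta\|W_t\|_\infty + \beta\varepsilon + \varepsilon$, we obtain
\begin{equation*}
|W_{t+1}(s)| \le (1-\alpha_t(s))|W_t(s)| + \alpha_t(s)\bigl(\beta\|W_t\|_\infty + (\beta+1)\varepsilon\bigr).
\end{equation*}
Pick $M$ with $\hat\beta:=\beta(M+1)/M < 1$. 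As long as $\|W_t\|_\infty > M(\beta+1)\varepsilon/(1-\beta)$, the factor in parentheses is at most $\hat\beta \|W_t\|_\infty$, so taking sup over $s$ yields $\|W_{t+1}\|_\infty \le \bigl(1-\alpha_t(s^*)(1-\hat\beta)\bigr)\|W_t\|_\infty$, and divergence of $\sum_t \alpha_t(s)$ for every $s$ drives $\|W_t\|_\infty$ into the $O(\varepsilon)$ ball in finite time, from which it cannot escape. Since $\varepsilon$ was arbitrary, $W_t \to 0$ almost surely.

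The main obstacle I expect is the bootstrap: the variance bound is stated in terms of $\|\Delta_t\|_\infty$, which itself is the quantity whose convergence we want. Consequently one cannot immediately invoke the Robbins–Siegmund theorem for $U_t$; a preliminary almost-sure bound on $\|\Delta_t\|_\infty$ has to be established using the contraction estimate $\beta<1$ together with careful martingale control of the cumulative noise, and this must be done uniformly across the finitely many coordinates $s\in\mathds{S}$ (where finiteness of $\mathds{S}$ is essential so that coordinatewise divergence of $\sum_t\alpha_t(s)$ implies sup-norm contraction in finite time). The remaining bookkeeping—interleaving the two scales so that the $c_t \to 0$ drift does not accumulate—is then routine.
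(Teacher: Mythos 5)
Your overall architecture matches the paper's (and the Bertsekas--Tsitsiklis/Jaakkola--Jordan--Singh) proof: split $F_t$ into its conditional mean $f_t$ and a martingale-difference part $\delta_t$, run two parallel recursions $U_t$ (noise) and $W_t$ (drift), kill $U_t$ by a second-moment supermartingale argument, and kill $W_t$ by the span/sup-norm contraction with the $\hat\beta=\beta(M+1)/M$ device. The $W_t$ half and the Robbins--Siegmund half are fine and are essentially what the paper does in Theorem \ref{syn_th} and Appendix \ref{alt_Sb}.

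The genuine gap is the boundedness bootstrap, which you correctly identify as the main obstacle but then dispatch with a remedy that does not close the circle. You propose to bound $\|\Delta_t\|_\infty$ via a Gr\"onwall recursion ``plus the summability of $\alpha_t^2|\delta_t|^2$ moments, controlling the noise by the martingale maximal inequality.'' But the only control you have on $\delta_t$ is $\mathrm{Var}(F_t(s)\mid P_t)\le K(1+\|\Delta_t\|_\infty)^2$, so the quadratic variation of the cumulative noise is itself bounded only in terms of the quantity you are trying to bound: the maximal inequality gives you nothing until you already know $\|\Delta_t\|_\infty$ is bounded. The paper breaks this circularity by an explicit rescaling: set $B_0=1+\|\Delta_0\|_\infty$, $B_{t+1}=\max(B_t,1+\|\Delta_{t+1}\|_\infty)$, and $R_t=\Delta_t/B_t$. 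Then $\hat w_t=(F_t-E[F_t\mid P_t])/B_t$ has conditional variance uniformly bounded by $K$, the normalized iterate satisfies $R_{t+1}\le(1-\alpha_t)R_t+\alpha_t(\beta R_t+\hat w_t)$ and hence converges to zero by the standard bounded-variance stochastic approximation theorem, and $R_t\le\epsilon$ eventually forces $B_{t+1}=\max(B_t,1+B_t\epsilon_t)$ to stabilize, i.e.\ $\|\Delta_t\|_\infty$ is almost surely bounded. Only after this step can your Robbins--Siegmund argument for $U_t$ (and the $c_t\to 0$ bookkeeping for $W_t$) be launched. Without the rescaling, or an equivalent localization/stopping-time device that you do not describe, the first step of your proof fails.
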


Convergence of asynchronous methods is challenging because the quantized state variable does not satisfy the Markov property and the error term is not a martingale noise. This issue does not arise in the synchronous method due to independent sampling across iterations. To deal with this, we utilize the ergodicity assumption on the state process and decompose the non-Markov noise term so that it can be written as the difference between the stationary mean and the empirical average. This difference is then folded into the decaying error term, i.e., the $c_t$  term in Lemma~\ref{czaba_lemma}.

 \begin{theorem}\label{quant_q}
 Under Assumption \ref{mix_kernel2} and Assumption \ref{asyn_q}, the algorithm given in (\ref{asyn_q_it}) converges almost surely, for sufficiently small $\delta$ (specifically, $\delta<\min_{y,u\in\mathds{Y}\times\mathds{U}} P(y_j|y,u)$ where $y_j$ is as in Assumption \ref{mix_kernel2}), to $Q^*$ which satisfies
\begin{align}\label{fixed2}
\hat{\rho} + Q^*(y,u)=C(y,u)+\sum_{z \in \mathds{Y}} P(z|y,u)\min_{v \in \mathds{U}} Q^*(z,v).
\end{align}
$P$ and $C$ are defined in (\ref{finite_cost}) with the weighting measure being the stationary distribution of the state process $X_t$ under the exploration policy.

Furthermore, for the learned value function and the policy, near optimality and convergence results of Theorem \ref{main_thm} are applicable.
\end{theorem}

%\sy{The value of the $\delta$ has to be specified in the algorithm/theorem statement. We should at least know a bound in the %model.}

\begin{proof}
The proof can be found in Appendix \ref{quant_q_proof}.
\end{proof}

%\begin{remark}
%One further difference between the synchronous and the asynchronous Q-learning algorithms presented is that in the latter, Assumption \ref{mix_kernel2}(ii) holds and a lower bound (represented by $\delta$ in the algorithm) is assumed to be known, whereas in the former this condition (and therefore its knowledge) is not assumed; on the other hand, the synchronous algorithm is not online. 
%\end{remark}

\sy{

\section{Extensions, Reflections, and Refinements}

In this section, we present some reflections and refinements. To avoid creating a distraction from the main text, these have been summarized in this final section. The first one involves arriving at near optimality via discounted cost minimization with discount parameters close to $1$, the second involves the relaxation of the minorization condition at the expense of further regularity in the kernel, and the third discussion is on empirical quantized model learning and equivalence with the quantized Q-learning convergence results.

\subsection{Approximate Optimality via Discounted Cost Criterion Approximation: Beyond Minorization}

Consider the following two conditions: (i) There exists a solution to the average cost optimality equation, and (ii) this solution is obtained via the vanishing discount method. Under these conditions, it follows  \citep[see][Theorems 1 and 2]{creggZeroDelayNoiseless}  \cite[see also][Theorem 7.3.6]{yuksel2020control} that a near-optimal policy for the discounted cost problem is also near-optimal for the average cost problem. 

Accordingly, a further method would be to approximate the Q-learning algorithm by its classical discounted version. This approach, in particular, does not make explicit use of the ergodicity or minorization condition (\ref{mix_kernel}) (except for its use in establishing the existence of a solution to the average cost optimality equation in our paper; this condition is not necessary in general). For example, for the belief-MDP reduction of partially observable models, such a condition does not hold; yet one can establish conditions under which a solution to the average cost optimality equation exists and is obtained via the vanishing discount technique (see, e.g., \cite{Bor00,StettnerSICON19,runggaldier1994approximations,platzman1980optimal,hsu2006existence,demirci2023average}). However, the question of approximation rates as the discount parameter approaches unity remains open for such problems. We leave this important direction for future research, but only note that one can arrive at near average cost optimality via such a general method under these relatively mild conditions as well. 

\subsection{Near Optimality of Finite Approximations when ACOE holds but Minorization Does Not}

For the difference of the value functions between the original and the approximate model, we were able to relax Assumption \ref{mix_kernel}, and use Assumption \ref{val_dif} with $K_\mathcal{T}<1$ instead. However, we could not use the same method to show near optimality of the approximate model policies. We now show that under additional regularity conditions, but not requiring minorization, we can do this for the asymptotic analysis; that is, we can show that the policy designed for the discretized model when applied to the original model is near optimal for the original model, with the performance loss decaying to zero as the quantization gets finer. Such conditions are particularly relevant for controlled (continuous-time) diffusions which are time-discretized. We make the following assumption (as in Theorem 4.1 of \cite{yuksel2023borkar}): 

\begin{assumption}\label{WeakCompInvEqui}
Suppose that 
\begin{itemize}
\item[(i)]
We have $\mathbb{X} \subset \mathbb{R}^d$ for some finite $d$ and $\mathds{X}$ is compact, and $\mathbb{U}$ is finite.
\item[(ii)]
For every stationary policy $\pi \in \Pi_{S}$ there exists a unique invariant probability measure. 

\item[(iii)] The kernel ${\cal T}(dy | x,u)$ is such that, the family of conditional probability measures ${\cal T}(dy | x,u), x \in \mathbb{X}, u\in \mathbb{U}\}$ admit densities $f_{x,u}(y)$ with respect to a reference measure $\psi$, and all such densities are bounded and equicontinuous (over $x \in \mathbb{X}, u\in \mathbb{U}$). 

\item[(iv)] ${\cal T}$ is weak Feller continuous. %Furthermore, the set of all invariant probability measures under stationary policies
%\begin{equation*}%\label{E-cH}
%{\cal H} :=  \bigg\{ \mu \in {\cal P}(\mathbb{X}):  \exists \pi \in \pi_{S} \text{\ such that\ } \mu(A) = \int_{\mathbb{X}} {\cal T}(A | x, u = \pi(x)) \mu(dx),  A \in {\cal B}(\mathbb{X}) \bigg\}
%\end{equation*}
 %is weakly compact (a sufficient condition being that there is a uniform moment condition under any policy together with weak Feller continuity noted above).
 %
% in the sense that the set of \emph{invariant occupation measures} (or, as is used more commonly in the literature: \emph{ergodic occupation measures} \cite{survey}) defined by
%\begin{equation}\label{E-cG}
%{\cal G} := \bigg\{ \mu \in {\cal P}(\mathbb{X} \times \mathbb{U}): \mu(B \times \mathbb{U}) = \int \mu(dx,du) {\cal T}( B | x,u), \ B \in {\cal B}(\mathbb{X}) \bigg\}
%\end{equation}
% is weakly compact, or (b) \hyperlink{H2}{(H2)} holds (for which $\cG$ is necessarily weakly compact \cite[Theorem 2.2]{arapostathis2021optimality}). 
\end{itemize}
\end{assumption}

\begin{theorem}\label{young_conv}
Suppose that Assumption \ref{lip_assmp} with $K_\mathcal{T}<1$ holds, and Assumption \ref{WeakCompInvEqui} holds. Then, 
\[\lim_{n\to\infty}J(x,\pi_n)  = J^*(x).\]
\end{theorem}

\begin{proof}
The proof can be found in Appendix \ref{young_conv_proof}.
\end{proof}

\subsection{Equivalence with Empirical Model Learning}

The implication of Theorem \ref{quant_q} is that the Q-learning algorithm (\ref{asyn_q_it}) converges to a limit where the limit solves the optimality equation (\ref{fixed2}) for an approximate model defined by the stationary distribution of the state process $X_t$ under the exploration policy. In particular, this model is precisely the empirical limit of a Markovian approximation in the following sense: Let the exploration policy $\pi$ given in the (quantized) Q-learning algorithm give rise to the invariant probability measure $\mu_{\pi}^*$. The limiting Q-function $Q^*( y, u )$ corresponds to the optimal Q-function of an approximate MDP defined over the quantized state space $\mathbb{Y}$. The effective cost $C^*( y, u )$ is the average cost over the bin $B_y$ weighted by the invariant distribution $\mu_{\pi}^*$ conditioned on bin $B_y$:
\begin{equation}\label{modelApp}
C^*( y, u ) = \mathbb{E}_{ x \sim \mu_{\pi}^* | x \in B_y } [ c( x, u ) ] = \int_{B_y} \frac{\mu_{\pi}^*(dx)}{\mu_{\pi}^*(B_y)}c(x,u).
\end{equation}
Observe that the above is, \cite[see e.g.][Theorem 2.1]{karayukselNonMarkovian}, equal to the almost sure limit of the empirical expression on the right hand side below:
\begin{equation}\label{empCostEst}
C^*( y, u )  = \lim_{N \to \infty} \frac{\sum_{k=0}^{N-1} c(X_k,U_k) 1_{\{X_k \in B_y,U_k=u\}}}{ \sum_{k=0}^{N-1} 1_{\{X_k \in B_y,U_k=u\}}}. 
\end{equation}
Similarly, the effective transition probability $P^*( y' | y, u )$ represents the probability of transitioning from bin $B_y$ to bin $B_{ y' }$ under action $u$, averaged over the invariant distribution:
\begin{equation}\label{costApp}
P^*( y' | y, u ) = \mathbb{P}_{ x \sim \mu_{\pi}^* | x \in B_y } [ q( X_{ t+1 } ) = y' | X_t = x, U_t = u ] =  \int_{B_y} \frac{\mu_{\pi}^*(dx)}{\mu_{\pi}^*(B_y)} {\cal T}(B_{y'} | x,u).
\end{equation}

Likewise, by \cite[Theorem 2.1]{karayukselNonMarkovian}, the above is the almost sure empirical limit of the right hand side below:
\begin{equation}\label{empTranEst}
P^*( y' | y, u ) = \lim_{N \to \infty} \frac{\sum_{k=0}^{N-1} 1_{\{X_{k+1} \in B_{y'}\} } 1_{\{X_k \in B_y,U_k=u\}}}{ \sum_{k=0}^{N-1} 1_{\{X_k \in B_y,U_k=u\}}} 
\end{equation}

An interpretation of the above result then is that one can first obtain the approximate model given with (\ref{modelApp}-\ref{costApp}) by forcing the data into a Markovian model for both the empirical cost estimate (\ref{empCostEst}) and empirical transition kernel estimate (\ref{empTranEst}), and then solve the MDP as if this empirically constructed model is the actual one, instead of running Q-learning whose limit is then optimal precisely for this learned/empirically constructed model. Accordingly, for this learned value function and the policy, near optimality and convergence results of Propositions~\ref{val_dif2} and~\ref{val_dif}, and Theorems~\ref{pol_err} and~\ref{weakContApprAsy} are applicable.

}

\section{Simulation and Case Study}

We consider a continuous space control problem. We let the state space  be $\mathds{X}=[0,1]$, and the action space to be $\mathds{U}=[-1,1]$.  The stage wise cost function is given by
\begin{align*}
c(x,u)=0.7(1-x) + 0.2(u+1)
\end{align*}
For the dynamics we assume: if $u>0$, for a given state $x$
\begin{align*}
x_1\sim\begin{cases} &\rm{Unif}[x,\min(x+u,1)] \text{ w.p. } 0.9\\
& \rm{Unif}[0,1] \text{ w.p. } 0.1.
\end{cases}
\end{align*}
If $u\leq 0$ 
\begin{align*}
x_1\sim\begin{cases} &\rm{Unif}[\max(x+u,0),x] \text{ w.p. } 0.9\\
& \rm{Unif}[0,1] \text{ w.p. } 0.1
\end{cases}
\end{align*}

The action controls the direction and magnitude of the stochastic state transitions: positive actions try to move the state upward and negative actions tend to move it downward, with larger actions leading to more likely movements. The cost function favors states closer to the upper end of the state space and penalizes large control inputs, and thus creates a tradeoff between keeping a high quality state and limiting control effort.

For  exploration, we use a randomized control policy  such that $u_t\sim \text{Unif}[0,1]$ for all $t$. We will analyze the problem numerically after the discretization of the action space. We verify Assumption \ref{lip_assmp} and Assumption \ref{mix_kernel}. Assumption \ref{mix_kernel} is satisfied with $\nu(\cdot)=0.1 \times \text{Unif}[0,1]$. For Assumption \ref{lip_assmp}, the Lipschitz constant of $c(x,u)$ is $0.7$ since the cost is linear in the state variable. 
For the transition kernel, the first order Wasserstein distance is equal to the $L_1$ distance of the CDF functions. Hence, one can check that the Lipschitz constant of the kernel is bounded by $0.9$.

Figure~\ref{rel_val_func} shows the convergence behavior of the relative value function for both the synchronous and asynchronous algorithms when the state and action spaces are discretized with 5 discretization bins.  Recall that the relative value function that satisfies the ACOE is not unique and any shifted version of the function satisfies the ACOE. Hence, for better comparison we normalize them by subtracting the minimum value the functions when we plot them. The limit values of the asynchronous and synchronous algorithms differ slightly because the weight measure $\mu$ (see (\ref{norm_inv})) is determined differently in each case. For the asynchronous algorithm, $\mu$ is the invariant measure of the state process under the exploration policy, whereas for the synchronous algorithm, $\mu$ is a user-specified sampling distribution over each bin; for the plotted values, we used the uniform measure.
\begin{figure}[h!]
\begin{center}
\includegraphics[scale=0.43]{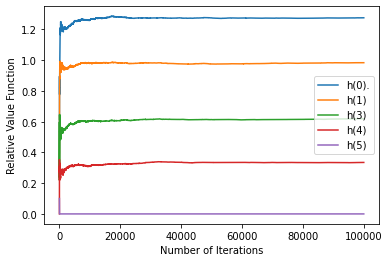} \includegraphics[scale=0.43]{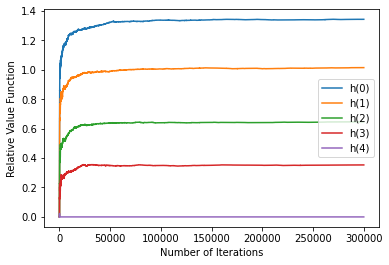} 
\caption{Relative value function convergence for synchronous(left) and asynchronous(right) algorithms}\label{rel_val_func}
\end{center}
\end{figure}

 Figure \ref{fig1} shows the convergence of the value constant, i.e. $\delta V_t(y_j)$, for different initial values $x_0=0.3,0.5,0.8$ for the asynchronous algorithm. We use the discretized action values $\hat{\mathds{U}}=\{-1,0,1\}$, and for the state space we choose the bins to be the intervals $[0,0.25], (0.25,0.5], (0.5,0.75], (0.75,1]$. As it can be seen from the figure, and as expected, the limit value constant does not depend on the initial state.
\begin{figure}[h!]
\begin{center}
\includegraphics[scale=0.5]{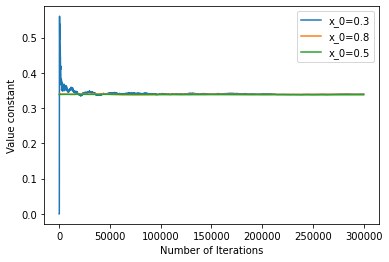}
\caption{Algorithm convergence under different initial conditions}\label{fig1}
\end{center}
\end{figure}

In Figure \ref{fig2},the upper curve shows the total average cost achieved by the learned policy with $\hat{\mathds{U}}=\{-1,0,1\}$ for different state-space quantization rates, while the lower curve shows the performance of the learned policies with $\hat{\mathds{U}}=\{-1,-0.5,0,0.5,1\}$ under varying state-space quantizations. For the state space, we use uniform quantization, i.e. if the size of the discrete state space is $M=3$, the quantization bins are $[0,\frac{1}{3}],(\frac{1}{3},\frac{2}{3}],(\frac{2}{3},1]$.
It is clear from the figure that as the quantization rate increases the regret decreases. Note that the asynchronous and the synchronous algorithms learn the same policy, therefore we do not plot them separately. In the same figure, we also plot the discretization error ($L_\mathds{X}$), and the accumulated average cost. As our results also suggest, the cost increases linearly with the increasing discretization error.
\begin{figure}[h!]
\begin{center}
\includegraphics[scale=0.5]{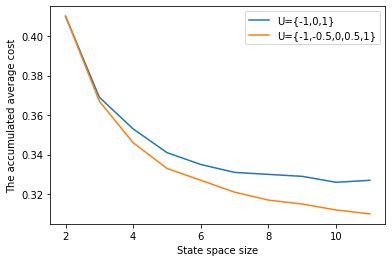}\includegraphics[scale=0.83]{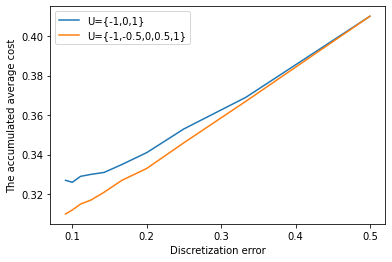}
\caption{Learned policy performance under different quantization rates}\label{fig2}
\end{center}
\end{figure}

\section{Concluding Remarks}

For infinite-horizon average-cost criterion problems, we presented approximation and reinforcement learning results for Markov Decision Processes with standard Borel spaces. We first provided a discretization based approximation method for fully observed Markov Decision Processes (MDPs) with continuous spaces under average cost criterion, and we derived error bounds for the approximations when the dynamics are only weakly continuous under certain ergodicity assumptions. In particular, we relaxed the total variation condition given in prior work to weak continuity as well as Wasserstein continuity conditions. We then presented synchronous and asynchronous Q-learning algorithms for continuous spaces via quantization, and established their convergence. We showed that the convergence is to the optimal Q values of the finite approximate models constructed via quantization. 

A direction for future research is the following. Since we are considering an average cost problem, one can obtain an online Q-learning algorithm where the past is used to optimally adapt the exploration policy, so that the optimal cost is obtained for each sample path under mild ergodicity conditions. This can be done, e.g., by increasing the exploration lengths and adapting the resulting policy using diminishing exploration.

\acks{
The authors are grateful to an anonymous referee and the Editor for their care, attention, and insightful and constructive comments.
}

\appendix

\section{Proof of Lemma \ref{holder_lemma}}\label{holder_lemma_proof}

\begin{proof}
We define iteratively 
\begin{align*}
h_{k+1}(x)&=\inf_{u\in\mathds{U}}\left\{c(x,u)+\int h_k(x_1)\mathcal{T}(dx_1|x,u)-\int h_k(x_1)\nu(dx_1)\right\}\\
&=\inf_{u\in\mathds{U}}\left\{c(x,u)+\kappa \int h_k(x_1){R}(dx_1|x,u)\right\}
\end{align*}
where  $R(\cdot|x,u):=\frac{\mathcal{T}(\cdot|x,u)-\nu(\cdot)}{\kappa}$, with $h_0\equiv 0$. Note that $h_k\to h$ uniformly.

We write for some $x,y\in\mathds{X}$:
\begin{align*}
\left|h_{k+1}(x)-h_{k+1}(y)\right|& \leq \sup_u\left\{ \left|c(x,u)-c(y,u)\right| + \kappa\left|\int h_k(x_1){R}(dx_1|x,u) - \int h_k(x_1){R}(dx_1|y,u)\right|\right\}\\
&\leq K_c d_{\mathds{X}}(x,y)^{1-\delta} d_{\mathds{X}}(x,y)^\delta + \kappa[h_k]_{H^\delta}\left[ \frac{K_\mathcal{T}}{\kappa} d_{\mathds{X}}(x,y)\right]^\delta\\
&\leq \left(K_c D^{1-\delta} + \kappa [h_k]_{H^\delta} \left(\frac{K_\mathcal{T}}{\kappa}\right)^\delta  \right)  d_{\mathds{X}}(x,y)^\delta.
\end{align*}
Noting $d_{\mathds{X}}(x,y) \leq D$, we obtain 
\begin{align*}
[h_{k+1}]_{H^\delta} \leq K_c D^{1-\delta} \sum_{t=0}^k \left( \kappa \left(\frac{K_\mathcal{T}}{\kappa}\right)^\delta  \right)^t \leq \frac{K_cD^{1-\delta}}{1-\kappa\left(\frac{K_\mathcal{T}}{\kappa} \right)^\delta}.
\end{align*}
Together, with the fact that $h_k\to h$ uniformly and that the H\"older constant is lower semicontinuous under uniform convergence, we conclude that
\begin{align*}
[h]_{H^\delta} \leq \frac{K_c D^{1-\delta}}{1-\kappa\left(\frac{K_\mathcal{T}}{\kappa}\right)^\delta}.
\end{align*}
\end{proof}

\sy{
\section{Proof of Theorem \ref{syn_th}}\label{proofSynQQL}
\begin{proof}
The main part of the proof is the convergence of the algorithm. If convergence is shown, then (i) follows from Proposition \ref{val_dif} and Lemma \ref{lip_lem}, (ii) follows from Proposition \ref{val_dif2} and Theorem \ref{pol_err} together with Lemma \ref{lip_lem}, and (iii) follows from Theorem \ref{weakContApprAsy}. Now, let 
\begin{align*}
F(Q)(y,u) := C(y,u) + \sum_{u,j}\min_{u_1} Q(y_1,u_1)P(y_1|y,u) .
\end{align*}
The process $Q_t$ satisfies the following form, with $S_t = Q_t - Q^*$:
\[S_{t+1}(y,u) = (1 - \alpha_t) S_t(y,u) + \alpha_t\bigg((F(Q_t)(y,u) - F(Q^{*})(y,u))  + \hat{\rho}+ w_t \bigg),\]
where $\alpha_t = \frac{1}{t}$. Furthermore $w_t: = \bigg(c(x,u)+  \min_{v} Q_t(Y_{1},v) - F(Q_t)(y,u) \bigg)$, where $x$ is generated from the bin $y$ belongs to according to the measure $\mu_y$. Note that $w_t$ is a zero-mean random variable. We will consider the following two parallel dynamics, as in \cite[Theorem 1]{jaakkola1994convergence}:
\begin{align}
S^a_{t+1}(y,u) &= (1 -\alpha_t(y,u)) S^a_t(y,u) + \alpha_t(y,u) w_t , \nonumber \\
S^b_{t+1}(y,u) &= (1 - \alpha_t(y,u)) S^b_t(y,u) + \alpha_t(y,u)\bigg(F(Q_t)(y,u) - F(Q_t^{*})(y,u)  +  \hat{\rho}\bigg) \label{denk2Bound},
\end{align}
We have $S_t(y,u) = S^a_{t+1}(y,u) + S^b_{t+1}(y,u)$. We will show further below that $S^a_{t+1}(y,u) \to 0$ almost surely. For now we assume this holds and focus on 
\begin{align*}
\|S^a_t + S^b_t\|_{sp}= \max_{y,u} (S^b_{t}(y,u) + S^a_{t}(y,u)) - \min_{y,u} (S^b_{t}(y,u) + S^a_{t}(y,u)).
\end{align*}
Under Assumption \ref{mix_kernel} $\|(F(Q_t)(\cdot,\cdot) - F(Q^{*})(\cdot,\cdot))\|_{sp} \leq \beta \|S_t\|_{sp} \leq \beta \|S^a_{t}\|_{sp} + \beta \|S^b_{t}\|_{sp}$, since $\|P(\cdot|y,u) - P(\cdot|y',u')\|_{TV} \leq \beta <1 $. With $\|S_t^a\|_\infty\to0$ almost surely, for every $\epsilon > 0$, there exists $N$ such that for $t \geq N$,  $\|S^a_{t+1}\|_{\infty} \leq \frac{\epsilon}{2}$ and so that $\|S^a_{t+1}\|_{sp} \leq \epsilon$ (where we suppress the sample path dependence). In the following, we assume that $t \geq N$. For some $M$ large enough, let $\hat{\beta} := \beta \frac{(M+1)}{M} < 1$. Furthermore, for $\|S^b_t\|_{sp} > M \epsilon$, we note that for any $(y',u') \in \mathds{Y} \times \mathbb{U}$
\[\beta \|S^b_t(y,u) - S^b_t(y',u') + \epsilon\| \leq \hat{\beta} \|S^b_t\|_{sp}.\]
For $(y',u') \in \mathds{Y} \times \mathds{U}$, $S^b_{t+1}(y',u') = (1 - \alpha_t) S^b_t(y',u') + \alpha_t\bigg(F(Q_t)(y',u') - F(Q^{*})(y',u') + \hat{\rho}\bigg)$: 
\begin{align}
& |S^b_{t+1}(y,u) - S^b_{t+1}(y',u')| \leq  (1 - \alpha_t) |S^b_t(y,u) - S^b_{t}(y',u')| \nonumber \\
&\quad + \bigg|\alpha_t\bigg(F(Q_t)(y,u) - F(Q^{*})(y,u)  -  \bigg(F(Q_t)(y',u') - F(Q^{*})(y',u') \bigg) \bigg|  \nonumber \\ 
& \leq  (1 - \alpha_t) \|S^b_t\|_{sp} + \alpha_t (\beta \|S^a_{t}\|_{sp} + \beta \|S^b_{t}\|_{sp}) \label{boundQ1}\\
& \leq  (1 - \alpha_t)  \|S^b_t\|_{sp} + \alpha_t \hat{\beta} \|S^b_t\|_{sp} = \left[1-\alpha_t(1-\hat{\beta})\right]\|S^b_t\|_{sp}\label{boundQ2}
\end{align}
In particular, opening the last bound iteratively, and noting that $\alpha_t=\frac{1}{t+1}$, we can write that for some $l\geq1$ we can write
$\|S_{t+l}^b\|_{sp}\leq  \|S_t^b\|_{sp} \prod_{k=t}^{t+l} (1-\frac{1-\hat{\beta}}{k})$. This product can be made arbitrarily small for any $t$ by choosing $l$ large enough. Therefore, there exists some $l<\infty$, such that $\|S^b_{t+l}\|_{sp}\leq M\epsilon$.  Furthermore, once $\|S^b_t\|_{sp} \leq M \epsilon$, we can show via (\ref{boundQ1}) and $\beta (M+1)/M < 1$ that it will remain there thereafter. Thus, for any $\epsilon > 0$, for large enough $t$, we have that $\|S^b_t\|_{sp} \leq M \epsilon$. Since $\epsilon > 0$ is arbitrary, the convergence result follows.  %An alternative proof of $\|S_t^b\|_{sp}\to 0$ almost surely can be found in Appendix \ref{alt_Sb}.

We now discuss $S^a_t$. Taking the square of $S^a_t$, we obtain:
\begin{align}\label{equationBF}
E[(S^a_{t+1}(y,u))^2| {\cal F}_t] \leq (S^a_t(y,u))^2 - 2 \alpha_t (S^a_t(y,u))^2+ \alpha_t^2 (S^a_t(y,u))^2 +  \alpha_t^2 w_t^2 
\end{align}
First, we have that for any $T > 0$,% by the Comparison Theorem \cite[Theorem 4.2.3]{yuksel2020control}
\begin{eqnarray}\label{comparisonQ}
E[\sum_{t=0}^{T-1} (2 \alpha_t - \alpha_t^2) (S^a_t(y,u))^2 &\leq& (S^a_0(y,u))^2 + E[\sum_{t=0}^{T-1} \alpha_t^2 w_t^2] 
\end{eqnarray}
We now show that the right hand term is bounded. Consider:
\[ Q_{t+1}(y,u) = (1 - \alpha_t) Q_{t}(y,u) +   \alpha_t (c(x,u)+ \min_{v} Q_t(q(x_{1}),v))\]
which implies that
\begin{align} |Q_{t+1}(y,u)| &\leq (1 - \alpha_t) \|Q_{t}\|_{\infty} +   \alpha_t (c(x,u)+  \|Q_{t}\|_{\infty} )  \leq \|Q_{t}\|_{\infty} + \alpha_t \|c\|_{\infty}, \nonumber 
\end{align}
And thus, since this holds for all $(y,u)$ pairs, $\|Q_{t+1}\|_{\infty} \leq \|Q_{t}\|_{\infty} + \alpha_t \|c\|_{\infty}$. By bounding the partials sums of harmonic series $\sum_{k=1}^t \frac{1}{k}$, the above implies that $\|Q_{t+1}\|_{\infty} \leq \log(t) \|c\|_{\infty} + M_1,$ for some finite $M_1$. However, $\alpha_t^2 w_t^2 \leq (1/t)^2 \bigg(2 \|c\|_{\infty}^2 + 4(\log(t) \|c\|_{\infty} + M_1)^2\bigg)$ is a summable sequence, and the right hand side of (\ref{comparisonQ}) is bounded. Furthermore, re-writing (\ref{equationBF}), in the expression
\[E[(S^a_{t+1}(y,u))^2| {\cal F}_t] \leq (S^a_t(y,u))^2 - (2 \alpha_t - \alpha_t^2) (S^a_t(y,u))^2 +  \alpha_t^2 w_t^2,\]
the term $\alpha_t^2 w_t^2$ is finite almost surely. This implies, by \cite[p. 33, Exercise II-4]{neveu1975discrete} (see also \cite[Exercises 4.4.13 and  4.4.14]{yuksel2020control} for a more explicit discussion), that $S^a_t$ converges to some random variable almost surely. The finiteness of the right hand term in (\ref{comparisonQ}) then implies that the limit must be zero: Suppose not; as $\alpha_t$ is not summable, there exists an infinite sequence of times so that each summation of  $\alpha_t$ between the times is bounded from below by a positive constant. Through this, if $(S^a_t)^2$ were not to converge to zero (given that it does converge to something), it would remain above a positive constant after a sufficiently large time, and then it would follow that $\sum_t (2 \alpha_t - \alpha_t^2 ){S^a_t}^2$ would not remain bounded. Therefore, if this were to happen with non-zero measure, the expectation would be unbounded, which in turn would, as $T \to \infty$, violate (\ref{comparisonQ}).

Finally, if the sequence converges under the span semi-norm, for some constant $\hat{\rho}$.
\[\hat{\rho} + {Q}^*(y,u)=F({Q}^*)(y,u) = C(y,u) + \sum_{y^{\prime}}P(y^{\prime} | y, u)\min_{v} {Q}^*(t^{\prime},v)\]
Note that the minimum of $u$, for each $y$, is the solution to the Average Cost Optimality Equation. Hence, the stationary policy $\{f^*\}$ is optimal. Furthermore, $\hat{Q}^*$ is only a constant shifted version of $Q^*$, $\hat{Q}^*$ also satisfies the ACOE. By definition, we have $\hat{Q}^*(y_0,u_0)=0$. Thus, we have that 
\begin{align*}
\hat{\rho}+\hat{V}^*(y_0)=C(y_0,u_0)+\sum_{y_1\in\mathds{Y}} \hat{V}^*(y_1) P(y_1|y_0,u_0).
\end{align*}
\end{proof}
}

\section{On Lemma \ref{czaba_lemma}}\label{discussionLemmaCsaba}

We provide a short proof for Lemma \ref{czaba_lemma} as applied to our theorem, essentially building on \cite{BertsekasTsitsiklis}. Write
\begin{align*}
\Delta_{t+1}(x)=(1-\alpha_t(x))\Delta_t(x)+\alpha_t(x)F_t(x)
\end{align*}
as
\begin{align*}
\Delta_{t+1}(x)=(1-\alpha_t(x))\Delta_t(x)+\alpha_t(x) \bigg(E[F_t(x)|P_t] +  F_t(x) - E[F_t(x)|P_t] \bigg)
\end{align*}
We will take $B_t$ so that $\|\Delta_t\|_{\infty} \leq B_t$ with $\hat{w}_t := \frac{F_t(x) - E[F_t(x)|P_t]}{B_t}$ so that $E[\hat{w}^2_t] \leq K$ by assumption.
To this end, we take $B_0 = 1+\| \Delta_{0}\|_{\infty}$ and for $t \in \mathbb{Z}_+$,
\[B_{t+1} = \max(B_t, 1+\| \Delta_{t+1}\|_{\infty})\]
Let
\[R_t(x) = \frac{\Delta_t(x)}{B_t}.\]
% Then, we have, with $\Delta_t(x) / B_t =: R_t(x)$. In particular, we take $B_t = 1+\|Delta_t\|_{\infty}$. 
Write
\[\Delta_{t+1}(x) \leq (1-\alpha_t(x))B_t R_t(x)+\alpha_t(x) \bigg(E[F_t(x)|P_t] +  B_t \hat{w}_t \bigg)\]
and
\begin{align}\label{denk11}
\Delta_{t+1}(x) \leq B_t \bigg( (1-\alpha_t(x)) R_t(x)+\alpha_t(x) \bigg(\beta R_t(x) +  \hat{w}_t \bigg) \bigg)
\end{align}
Therefore,
\[R_{t+1}(x)B_{t+1} \leq B_t \bigg( (1-\alpha_t(x)) R_t(x)+\alpha_t(x) \bigg(\beta R_t(x) +  \hat{w}_t \bigg) \bigg)\]
and since $\frac{B_t}{B_{t+1}}\leq 1$,
\[R_{t+1}(x) \leq \bigg( (1-\alpha_t(x)) R_t(x)+\alpha_t(x) \bigg(\beta R_t(x) +  \hat{w}_t \bigg) \bigg)\]
We know that $(1-\alpha_t(x)) R_t(x)+\alpha_t(x) \bigg(\beta R_t(x) +  \hat{w}_t \bigg) \to 0$ almost surely (\cite{TsitsiklisQLearning} or \cite[Theorem 9.1.1]{yuksel2020control}) as $\hat{w}_t$ has a uniformly bounded variance. Accordingly, for large enough $t$, 
\[R_{t+1}(x) \leq \epsilon,\]
and by (\ref{denk11})
\[B_{t+1} = \max(B_t, 1+ B_t \epsilon_t)\]
This implies then that $B_t$ is bounded (almost surely; though not necessarily by a constant over all realizations), and so is $\Delta_t$. Once we have that $\Delta_t$ is bounded, we can write
\begin{align*}
S^b_{t+1}(x)&=(1-\alpha_t(x))S^b_t(x)+\alpha_t(x) \bigg(E[F_t(x)|P_t] \bigg) \nonumber \\
S^a_{t+1}(x)&=(1-\alpha_t(x))S^a_t(x)+\alpha_t(x) \bigg(F_t(x) - E[F_t(x)|P_t]\bigg)
\end{align*}
Now, we have that in $S^a_t$ above, the term $\bigg(F_t(x) - E[F_t(x)|P_t]\bigg)$ has a conditionally bounded covariance, even though there is no uniform bound. Nonetheless, \cite[Corollary 4.1]{BertsekasTsitsiklis} implies that $S^a_t \to 0$ in this case as well. With $S^a_t \to 0$, $S_b(t) \to 0$ also via \cite{TsitsiklisQLearning} or \cite[Theorem 9.1.1]{yuksel2020control}.

\section{Proof of Theorem \ref{quant_q}}\label{quant_q_proof}

\begin{proof}
For small enough $\delta$, we define a positive measure $\nu'$ such that
\begin{align*}
\nu'(y_j)=\delta, 
\end{align*}
and $\nu'(y)=0$ otherwise where $y_j$ is as in Assumption \ref{mix_kernel2}.
$\nu'$ then satisfies 
\begin{align*}
P(\cdot|y,u)\geq\nu'(\cdot)
\end{align*}
since we selected $\delta<\min_{y,u} P(y_j|y,u)$.
%The measure $\nu'$ can be altered for the case where Assumption \ref{mix_kernel2} does not hold but Assumption \ref{mix_kernel} do, that is if $\nu(B_i)$ is not guaranteed to be strictly positive for all bins $B_i$. In this case, if one can estimate some representative quantized states (quantization bins)  $\{y'_{1},\dots,y'_{k}\}$ that is reachable from anywhere at one step , then one can use
%\begin{align*}
%\nu'(y_j')=\delta, \text{ for all } y'_1,\dots,y'_k.
%\end{align*}
We now define the following new transition measure (though, not a conditional probability measure)
\begin{align*}
\hat{P}(\cdot|y,u)=P(\cdot|y,u)-\nu'(\cdot).
\end{align*}
Then, as noted earlier, the following operator is a contraction with $1-\delta$ on bounded measurable functions $\B(\mathds{Y}\times{\mathds{U}})$ (see (\ref{contractAverageCostArgum}) and \cite[p.61]{hernandez2012adaptive}) 
\begin{align*}
(\hat{\mathbb{T}}f)(y,u)=C(y,u) + \sum_{y'}\min_uf(y',u)\hat{P}(y'|y,u).
\end{align*}

We define the fixed point, say $Q^*(y,u),$ of the mapping $\hat{\mathbb{T}}$, such that 
\begin{align*}
Q^*(y,u)&=(\hat{\mathbb{T}}Q^*)(y,u)=C(y,u)+\sum_{y'}\min_uQ^*(y',u)\hat{P}(y'|y,u)\\
&=C(y,u)+\sum_{y'}\min_uQ^*(y',u)P(y'|y,u)-\sum_{y'}\min_uQ^*(y',u)\nu'(y')\\
&=C(y,u)+\sum_{y'}\min_uQ^*(y',u)P(y'|y,u)-\delta\min_uQ^*(y_j,u)
\end{align*}
which satisfies an ACOE with $\delta\min_uQ^*(y_j,u)=\hat{\rho}$. 

We claim that the iterations converge to $Q^*$. Now, let $\Delta(y,u):=Q_t(y,u)-Q^*(y,u)$, we then write
\begin{align*}
\Delta_{t+1}(y,u)=(1-\alpha_t(y,u))\Delta_t(y,u)+\alpha_t(y,u)F_t(y,u)
\end{align*}
where
\begin{align*}
F_t(y,u)=&\left(c(X_t,U_t)+  V_t(Y_{t+1}) - \delta V_t(y_j)\right) \\
&-\left(C(y,u)+\sum_{y'}V^*(y')P(y'|y,u)-\delta V^*(y_j)\right)
\end{align*}
where $Y_{t+1}=q(X_{t+1})$. We now write $F_t=\hat{F}_t+r_t$ by adding and subtracting $V^*(Y_{t+1})$ with
\begin{align}
&\hat{F}_t(y,u)=V_t(Y_{t+1})- \delta V_t(y_j) - \left(V^*(Y_{t+1})- \delta V^*(y_j)\right) \nonumber \\
&r_t(y,u)=c(X_t,U_t)-C(y,u)+V^*(Y_{t+1})- \sum_{y'}V^*(y')P(y'|y,u). \label{rhatF}
\end{align}
We define two processes $\delta_t(y,u)$ and $v_t(y,u)$:
\begin{align*}
&\delta_{t+1}(y,u)=(1-\alpha_t(y,u))\delta_t(y,u)+\alpha_t(y,u)\hat{F}_t(y,u) \\
&v_{t+1}(y,u)=(1-\alpha_t(y,u))v_t(y,u)+\alpha_t(y,u)r_t(y,u).
\end{align*}

We first show that $v_t(y,u)\to 0$ for all $(y,u)$. Due to how we have chosen the learning rates, one can show that
\begin{align*}
v_{t+1}(y,u)=\frac{\sum_{k=0}^{t-1} r_{k}(y,u) \mathds{1}_{\{(Y_k,U_k)=(y,u)\}}}{\sum_{k=0}^{t-1} \mathds{1}_{\{(Y_k,U_k)=(y,u)\}}}
\end{align*}
where $Y_k=q(X_k)$. 
By Items~(2) and~(3) of Assumption~\ref{asyn_q}, the stationary probability of the event $\{(Y_t,U_t)=(y,u)\}$ equals $\mu_\pi(B_y)\cdot\pi(u|y)>0$ for every $(y,u)\in\mathds{Y}\times\mathds{U}$. Hence every pair $(y,u)$ is visited infinitely often almost surely, so the denominator above diverges to infinity as $t\to\infty$ and the ratio is well-defined for all sufficiently large $t$. This property also ensures that the step size conditions $\sum_t \alpha_t(y,u)=\infty$ and $\sum_t \alpha_t^2(y,u)<\infty$ of Lemma~\ref{czaba_lemma} hold almost surely for every $(y,u)$. Note that the joint process $\{X_t,Y_t,U_t\}$ is Markovian and has a unique stationary measure under Assumption \ref{asyn_q}  given by
\begin{align*}
Pr(X_t\in A,Y_t=y,U_t=u) =\int_A  \pi(u|y)\mathds{1}_{\{q(X_t)=y\}}\mu_\pi(dx)
\end{align*}
for any $A\in\B(\mathds{X})$, and for any $y,u\in\mathds{Y}\times\mathds{U}$ where $\mu_\pi$ is the stationary distribution of the hidden state process under the exploration policy $\pi$.

 Thus, we first write assuming $y\in B_i$
\begin{align*}
&\frac{\sum_{k=0}^{t-1} c(X_k,U_k) \mathds{1}_{\{(Y_k,U_k)=(y,u)\}}}{\sum_{k=0}^{t-1} \mathds{1}_{\{(Y_k,U_k)=(y,u)\}}} = \frac{\frac{1}{t}\sum_{k=0}^{t-1} c(X_k,U_k) \mathds{1}_{\{(Y_k,U_k)=(y,u)\}}}{\frac{1}{t}\sum_{k=0}^{t-1} \mathds{1}_{\{(Y_k,U_k)=(y,u)\}}}\\
 &\to \frac{\int_{B_i} c(x,u)\pi(u|y_i)\mu_\pi(dx)   }{ \pi(u|y_i) \mu_\pi(B_i)} = \frac{\int_{B_i}c(x,u)\mu_\pi(dx)}{\mu_\pi(B_i)}= C(y,u).
\end{align*}
Using identical arguments, we can also show that
\begin{align*}
&\frac{\sum_{k=0}^{t-1} V^*(Y_{t+1}) \mathds{1}_{\{(Y_k,U_k)=(y,u)\}}}{\sum_{k=0}^{t-1} \mathds{1}_{\{(Y_k,U_k)=(y,u)\}}} \to \sum_{y'}V^*(y')P(y'|y,u).
\end{align*}
Hence, we have that $v_t(y,u)\to 0$ almost surely for all $(y,u)\in\mathds{Y}\times\mathds{U}$.
%Since the learning rates are linear, one can show that $v_t\to 0$, if
%\begin{align*}
%&\frac{1}{T}\sum_{t=1}^TC_i \to C(y_i,u_i)\\
%&\frac{1}{T}\sum_{t=1}^TV^*(Y_{t+1}^i)\to\sum_{y'} V^*(y') P(y'|y_i,u_i) 
%\end{align*}
%where $Y_{t+1}^i$ denotes the next quantized state following the pair $(y_i,u_i)$ after the $t$th time $(t_i,u_i)$ have been visited. Both these convergences holds since we assume that the controlled state process $X_t$ converges to its unique invariant measure, $\mu_\pi$ under the exploration policy $\pi$. 
To show that $\delta_t$ converges to zero, we will use Lemma \ref{czaba_lemma}.
% (\sy{note that we have $\|Q_t(\omega)\|_{\infty} \leq B_t := K_1 + (1+\delta+\kappa)^t$ for arbitrary $\kappa > 0$ and some $K_1 < \infty$}). 
Under Assumption \ref{mix_kernel2}
\begin{align*}
E[\hat{F}_t(y_i,u_i)|h_t]\leq\beta \|V_t-V^*\|_\infty\leq \beta\|\Delta_t\|_\infty \leq  \beta\|\delta_t\| + \beta\|v_t\|_\infty
\end{align*}
where $\beta:=(1-\delta)<1$, and $\|v_t\|_\infty$ converges to zero almost surely. We finally need to verify that 
\begin{align*}
Var(\hat{F}_t|h_t)\leq K(1+\|\delta_t\|_\infty)^2
\end{align*}
where $h_t=\{y_t,u_t,\dots,y_0,u_0\}$. We write
\begin{align*}
Var(\hat{F}_t|h_t)=&E\bigg[ \bigg(  V_t(Y_{t+1}) -\delta V_t(y_j) - V^*(Y_{t+1}) + \delta V^*(y_j)   \\
& - \int V_t(y')P(y'|h_t) + \delta V_t(y_j) + \int V^*(y')P(y'|h_t) -  \delta V^*(y_j)  \bigg)^2\bigg]\\
&= E\bigg[ \bigg(   V_t(Y_{t+1}) -  V^*(Y_{t+1}) - \int V_t(y')P(y'|h_t) + \int V^*(y')P(y'|h_t)\bigg)^2\bigg]\\
&\leq  \|V_t - V^*\|^2_\infty \leq \|\delta_t + v_t\|^2_\infty.
\end{align*}
Note that $\|v_t\|_\infty$ remains bounded uniformly over $t$ (and over all sample paths as $V^*, c$ are bounded in (\ref{rhatF})), since the cost function $c(x,u)$ and $V^*(y)$ are uniformly bounded and thus $r_t(y,u)$ is bounded.  Hence, we can find some $\hat{K}$ such that 
%As we have shown earlier, $\|v_t\|_\infty\to 0$ almost surely, hence, we can find some $\hat{K}$ such that 
\begin{align*}
Var(\hat{F}_t|h_t) \leq \hat{K}(1+\|\delta_t\|_\infty)^2.
\end{align*}
Thus, we can conclude that $\|\delta_t\|_\infty\to 0$ almost surely.% which proves part (i) of the result. (ii) and (iii) are consequences of Theorems \ref{val_dif}, \ref{pol_err} and \ref{weakContApprAsy}.

\end{proof}

\section{Proof of Theorem \ref{young_conv}}\label{young_conv_proof}

\begin{proof} To make the analysis explicit let ${\cal T}_n$ be the discretized finite state/action model given by (\ref{finite_cost}). Notably, let $\pi_n$ be optimal for ${\cal T}_n$. We then seek to bound:

\[J({\cal T}, \pi_n) - J({\cal T}_n, \pi_n)  + J({\cal T}_n, \pi_n) - J({\cal T}, \pi) \]

The second term above goes to zero by Proposition \ref{val_dif}. We consider then the first term
	\[J({\cal T}, \pi_n) - J({\cal T}_n, \pi_n)\]
Instead of working with the ACOE, we will find it convenient to work with the induced invariant probability measures. Let the invariant measures be: for ${\cal T}$ under $\pi_n$ be $\nu^n$ (with $\nu^n \ll \psi$); and for ${\cal T}_n$ under $\pi_n$ be $\hat{\nu}^n$, so that

\begin{align}\label{nunT}
	\nu^n(dx') = \int \nu^n(dx) \int {\cal T}(dx’|x , u) \pi_n(du|x)
	\end{align}

\begin{align}\label{nunT^nn}
\hat{\nu}^n(dx') = \int \hat{\nu}^n(dx) \int {\cal T}_n(dx’|x , u) \pi_n(du|x)
	\end{align}
where (\ref{nunT^nn}) is constructed via the invariance equation
\begin{align}\label{nunT^n}
\hat{\nu}^n(B^n_{y'}) = \sum_{y} \hat{\nu}^n(B^n_y) \int \hat{\nu}^n(dx | y) \int  ({\cal T}(B^n_{y'}|s , u)  \mu^n_{y}(ds)) \pi_n(du|x).
	\end{align}
Here, $\mu^n_y$ is a pre-defined weighting measure and $\hat{\nu}^n(dx | y)$ is an artificial normalization as it does not impact the integration on the right hand side noting that $\pi_n(du|x)$ is a constant policy for all $x \in B^n_y$. Note that by the construction above as ${\cal T}(\cdot|s , u) \ll \psi(\cdot)$, we have that we can extend $\hat{\nu}^n(B^n_{y'})$ to the entire $\mathbb{X}$ so that $\hat{\nu}^n(dx) := \sum_{y} \hat{\nu}^n(B^n_y) \int \hat{\nu}^n(dx | y) \ll \psi(dx)$.

The question is, does 
\[	\int \nu^n(dx) c(x,\pi_n(x))  - \int \hat{\nu}^n(dx) c(x,\pi_n(x)) \to 0\]
as $n \to \infty$? Consider (\ref{nunT}) and take $n \to \infty$. We have that $\nu^n$ to some $\bar{\nu}$ along a subsequence weakly by compactness, where this convergence is in total variation by equi-continuity of densities with respect to $\psi$. Furthermore, along a further subsequence, $\pi_n \to \pi^*$ for some $\pi^*$ in Young topology at input $\psi$ which then implies that this is also true at input $\bar{\nu}$ by \cite[Lemma 3.6]{yuksel2023borkar}. Since convergence of $\nu^n(dx) \to \bar{\nu}(dx)$ is also in total variation \cite[Theorem 4.1]{yuksel2023borkar}, this then implies that $\nu^n(dx)\pi_n(du|x)$ converges weakly to some $\bar{\nu}(dx) \pi^*(du|x)$.

For the second term (\ref{nunT^n}), we note that $\hat{\nu}^n(dx)$ is defined first by its discrete support on the bins $B^n_y$ via the weighting measures $\mu_y$, 
but this can be extended to the entire $\mathbb{X}$ so that $\hat{\mu}^n \ll \psi$ as noted above. Therefore, by the equi-continuity condition, this sequence of measures will have a converging subsequence which does so in total variation \cite[Theorem 4.1]{yuksel2023borkar} to some measure $\tilde{\nu} \ll \psi$ in total variation. This then implies that the measure $\hat{\nu}^n(dx)\pi_n(du|x)$ converges weakly, along a subsequence, to some $\tilde{\nu}(dx) \pi^*(du|x)$, 

Since ${\cal T}$ is weakly continuous, it follows that for (\ref{nunT^n}), by \cite[Theorem 3.5]{serfozo1982convergence} or \cite[Theorem 3.5]{Lan81}, the limit leads to invariance and therefore the limits $\bar{\nu}$ and $\tilde{\nu}$ have to be  invariant under the same policy $\pi^*(du|x)$. However, since the invariant measure is to be unique given the policy $\pi^*$ by hypothesis, it must be that these limit measures are identical. Therefore, the induced costs \[\int  \nu^n(dx) \pi_n(du|x) c(x,u) - \int \hat{\nu}^n(dx) \pi_n(du|x) c(x,u) \to 0.\]

It should be noted that $\pi_n$ converges to a limit $\pi^*$ under the Young topology (and not pointwise in $x$); this is why the analysis above is needed.  One could also note that the analysis above can be generalized to the case where $\mathbb{X} = \mathbb{R}^d$, however with the required tightness conditions on the set of invariant measures as in \cite[Theorem 4.1]{yuksel2023borkar} and the associated ACOE conditions. 

\end{proof}

\section{Alternative Ergodicity Conditions}\label{erg_conds_sec}
\begin{proposition}\label{erg_conds}\cite[Theorem 3.2]{hernandez1991recurrence}
Consider the following.
\begin{itemize}
\item [a.] There exists a state $x^*\in\mathbb{X}$ and a number $\beta>0$ such that $\mathcal{T}(\{x^*\}|x,\pi)\geq\beta$, for all  $x\in\mathbb{X},\pi\in\Pi_s$.
\item[b.] There exists a positive integer $t$ and a non-trivial measure $\nu$ on $\mathbb{X}$ such that $\mathcal{T}^t(\cdot|x,\pi)\geq\nu(\cdot)$ for all $x\in\mathbb{X},\pi\in\Pi_s$.
\item [c.] For each $\pi\in\Pi_s$, the transition kernel $\mathcal{T}(dy|x,\pi)$ has a density $p(y|x,\pi)$ with respect to a sigma-finite measure $m$ on $\mathbb{X}$, and there exist $\epsilon>0$ and $C\in\mathcal{B}(\mathbb{X})$ such that $m(C)>0$ and $p(y|x,\pi)\geq\epsilon$ for all $y\in C, x\in \mathbb{X}, \pi\in\Pi_s$.
\item[d.] For each $\pi\in\Pi_s$, $\mathcal{T}(dy|x,\pi)$ has a density $p(y|x,\pi)$ with respect to a sigma-finite measure $m$ on $\mathbb{X}$, and $p(y|x,\pi)\geq p_0(y)$ for all $x,y\in\mathbb{X}$, $\pi\in\Pi_s$, where $p_0$ is a non-negative measurable function with $\int p_0(y)m(dy)>0$.
\item[e.] There exists a positive integer $t$ and a measure $\nu$ on $\mathbb{X}$ such that $\nu(\mathbb{X})<2$ and $\mathcal{T}^t(\cdot|x,\pi)\leq\nu(\cdot)$ for all $x\in\mathbb{X}, \pi\in\Pi_s$.
\item[f.] There exists a positive integer $t$ and a positive number $\beta<1$ such that $\|\mathcal{T}^t(\cdot|x,\pi)-\mathcal{T}^t(\cdot|x',\pi)\|_{TV}\leq 2\beta$ for all $x,x'\in\mathbb{X}, \pi\in\Pi_s$.
\item[g. ] There exists a positive integer $t$ and a positive number $\beta$ for which the following holds: For each $\pi\in\Pi_s$, there is a probability measure $\nu_\pi$ on $\mathbb{X}$ such that $\mathcal{T}^t(\cdot|x,\pi)\geq \beta\nu_\pi(\cdot)$ for all $x\in \mathbb{X}$.
\item[h. ] There exist positive numbers $c$ and $\beta$, with $\beta<1$, for which the following holds: For each $\pi\in\Pi_s$, there is a probability measure $p_\pi$ on $\mathbb{X}$ such that $\|\mathcal{T}^t(\cdot|x,\pi)-p_\pi(\cdot)\|_{TV}\leq c\beta^t$ for all $x\in\mathbb{X}, t\in\N$.
\item[i. ] The state process is uniformly ergodic such that 
$\lim_{t\to\infty}\|\mathcal{T}^t(\cdot|x,\pi)-p_\pi(\cdot)\|_{TV}=0$
uniformly in  $x\in\mathbb{X}$  and $\pi\in\Pi_s$.
\end{itemize}
The conditions above are related as follows:
\begin{align*}
&a \to b\\
&e\to f\\
&c\to d \to b  \to f \leftrightarrow g \leftrightarrow h \leftrightarrow i.
\end{align*}
\end{proposition}

\bibliography{SerdarBibliography,AliBibliography}

\end{document}